\newtheorem{theorem}{Theorem}[section]
\newtheorem{corollary}[theorem]{Corollary}
\newtheorem{lemma}[theorem]{Lemma}
\newtheorem{proposition}[theorem]{Proposition}
\theoremstyle{definition}
\newtheorem{question}[theorem]{Question}
\newtheorem{definition}[theorem]{Definition}
\newtheorem{example}[theorem]{Example}
\newtheorem{remark}[theorem]{Remark}
\numberwithin{equation}{subsection}
\newtheorem*{ack}{Acknowledgement}
\newcommand{\VAP}{\mathrm{VAP}}
\newcommand{\AB}{\mathrm{AB}}
\newcommand{\Aut}{\operatorname{Aut}}
\newcommand{\Sym}{\operatorname{Sym}}
\newcommand{\id}{\mathrm{id}}
\def\bea{\begin{eqnarray}}
\def\eea{\end{eqnarray}}
\def\nn{\nonumber}
\begin{document}
\title{Extensions of Yang-Baxter sets}
\author{Valeriy G. Bardakov}
\author{Dmitry V. Talalaev}

\date{\today}
\address{Sobolev Institute of Mathematics, Novosibirsk 630090, Russia.}
\address{Novosibirsk State Agrarian University, Dobrolyubova street, 160, Novosibirsk, 630039, Russia.}
\address{Regional Scientific and Educational Mathematical Center of Tomsk State University,
36 Lenin Ave., 14, 634050, Tomsk, Russia.}
\email{bardakov@math.nsc.ru}

\address{Moscow State University, 
119991 Moscow RUSSIA }
\email{dtalalaev@yandex.ru}

\subjclass[2010]{Primary 17D99; Secondary 57M27, 16S34, 20N02}
\keywords{Yang--Baxter equation, set-theoretic solution, quandle, virtual pure braid group, Hopf algebra }

\begin{abstract}
The paper extends the notion of braided set and its close relative - the Yang-Baxter set – to the category of vector spaces and explore structure aspects of such a notion as morphisms and extensions. In this way we describe a family of solutions for the Yang--Baxter equation  on  $B \otimes C$ (on  $B \times C$, respectively) if given $(B, R^B)$ and $(C, R^C)$ are two linear (set-theoretic) solutions of the Yang--Baxter equation. One of the key observation is the relation of this question with the virtual pure braid group.

\end{abstract}
\maketitle
\tableofcontents

\section{Introduction}

A solution of the quantum Yang--Baxter equation (YBE) is a linear map $R : V \otimes V \to V\otimes V$ satisfying 
\begin{equation}
\label{YB}
	R_{12} R_{13} R_{23} = R_{23} R_{13} R_{12},
\end{equation}
 where  $V$ is a vector space over a field $K$ and $R_{ij} : V \otimes V \otimes V  \to  V \otimes V \otimes V$ acts as $R$ on the $(i,j)$ tensor factor and as the identity on the remaining factor. 
V.~M.~Buchstaber~\cite{Bukh} called the map $R$ which satisfies the YBE by a  {\it Yang-Baxter map}. The pair $(V, R)$ is said to be a {\it solution of the YBE} or simply a {\it solution}. 

The  Yang-Baxter equation, or the 2-simplex equation, or the triangle equation, is one of the basic  equations in mathematical physics and in  low dimension topology.
It lies in the foundation of the theory of quantum groups, solvable models of statistical mechanics,   knot theory, braid theory. 
At first, the YBE appears  in the paper of C.~N.~Yang  \cite{Yang} for studying the many-body problem. Later R.~J.~Baxter \cite{Bax} introduced  this equation for the study of solvable vertex models in statistical mechanics as the condition of commuting transfer matrices. Another derivation of the YBE follows from the factorization of the $S$-matrix in $1+1$ dimensional Quantum Field Theory (see papers of  A.~B.~Zamolodchikov \cite{Z,Z-1}). The YBE is also essential in Quantum Inverse Scattering Method for integrable systems \cite{STF, TF}.

V.~G.~Drinfeld \cite{Drinfeld} suggested focusing on a specific class of solutions: the set-theoretic, i.~e. solutions for which the vector space $V$ is  spanned by a set $X$, and $R$ is the linear operator induced by a map $R: X \times X \to  X \times X$. In this case we say that $(X,R)$ is a set-theoretic solution to the Yang--Baxter equation or simply a solution to the YBE.  We also call the pair $(X,R)$ the Yang-Baxter set. Set-theoretic solutions have connections for example with groups of I-type, Bieberbach groups, bijective 1-cocycles, Garside theory and a wide class of integrable discrete dynamical systems \cite{Veselov, Ves2, BS}.

It is easy to see that for any $X$ the map $P(x, y) = (y, x)$ gives a set-theoretic solution for the YBE. On the other side, if $R$ is a solution for the YBE, then the map $S = P R$ satisfies the braid relation
$$
(S \times \id) (\id \times S) (S \times \id) = (\id \times S) (S \times \id) (\id \times S)
$$
- the defining relation in the braid group $B_n$.  Topologically the braid relation is simply the third Reidemeister move of planar diagrams of links. 
In 1980s D.~Joyce \cite{Joyce} and S.~V.~Matveev~\cite{Mat} introduced quandles as invariants of knots and links and proved that any quandle gives an elementary set-theoretic solution to the braid equation.

It is well known (see, for example, \cite[Chapter 10, Section 6]{Kas}) that  any braided set $(X, S)$ with invertible $S: X \times X \to X \times X$ defines a representation 
$B_n \to \Sym(X^n)$, and the composition $R=PS$ gives an invertible solution for the YBE.
We show that this  solution $(X, R)$   defines a representation of the  virtual pure braid group $VP_n$, for any $n \geq 2$,  into $\Sym(X^{n})$.

In this paper, we develop such a point of view on solutions for the Yang-Baxter equation or on braidings as a representative of a PROP structure with biarity (2,2) morphisms \cite{Markl}. Let us recall that PROP is a generalization of the concept of an operad with operations of the highest valency, in particular, an operation that brings a pair of values for given two arguments. These structures are of great importance in the study of multivalued groups \cite{BR}. Algebras over PROPs with biarity (2,2) morphisms interpolate between algebras and coalgebras. Apparently, this is why the Yang-Baxter equation plays such a significant role in the theory of Lie-bialgebras and Hopf algebras. In this context the following questions are of great importance: the functors and equivalences between such PROPs, the extensions of such categories and possible classifications. It turns out that in contrast to the notion of extension which is natural in the category of groups here the notion close to the bicrossed product of groups is more general and meaningful. The main result of the paper is related to this. First of all, we investigate special subgroups of the virtual braid group $VP_n$, construct extensions of braided sets as representations of subgroups of $VP_n$. We also develop the similar formalism in the vector space category, in the case of extensions of quasi-triangular Hopf algebras.

The paper is organized as follows. In Section \ref{sec-prelim} we recall known facts on Hopf algebras, extensions of braided sets related to group structures and the virtual pure braid group. In Section \ref{sec-Br} we establish connection between solutions for the Yang-Baxter equation and representation of the virtual pure braid group and the analog for the braid equation. In Sections \ref{sec-Ext} and \ref{sec-Ext2} we elaborate the extension procedure in the Hopf algebra and the set-theoretic case respectively. The Section \ref{sec-Gr} is devoted to the description of the group controlling our extension procedure and  interpret our construction in terms of simplicial sets and simplicial groups.

\section{Preliminaries }\label{sec-prelim}

\subsection{Hopf algebra} Recall some definitions in the theory of Hopf algebras (see, for example, \cite{Kas}). Suppose that $H$ is a vector space  over a field $K$. {\it Comultiplication} on
$H$ is a linear map
$$
\Delta : H \to H \otimes_K H.
$$
For $H$ of finite dimension a multiplication on $H$ induces a comultiplication on $H^*$ and vice versa - a comultiplication on $H$ induces a multiplication on $H^*$. Hence $H^*$ becomes an algebra which is not-associative in general.

Let us denote a comultiplication  of $h \in H$ by
$$
\Delta(h) = \sum h_i^{(1)} \otimes h_i^{(2)}.
$$
Comultiplication $\Delta$ is said to be {\it cocommutative}, if
$$
\Delta(h) = \sum h_i^{(2)} \otimes h_i^{(1)}.
$$
Comultiplication $\Delta$ is said to be {\it coassociative} if the following holds in $H \otimes H \otimes H$:
$$
\sum \Delta \left( h_i^{(1)} \right) \otimes h_i^{(2)} = \sum h_i^{(1)} \otimes \Delta \left( h_i^{(2)} \right) \Leftrightarrow (\Delta \otimes id_H) \Delta = (id_H \otimes  \Delta)  \Delta.
$$

{\it Counit} is a functional $\varepsilon : H \to K$  such that
$$
\sum h_i^{(1)}  \varepsilon \left( h_i^{(2)} \right) = \sum \varepsilon \left( h_i^{(1)} \right)  h_i^{(2)},
$$
where
$$
\Delta(h) = \sum h_i^{(1)} \otimes h_i^{(2)}.
$$
This axiom can be presented in the form
$$
(\varepsilon \otimes id_H) \Delta = \id_H = (id_H \otimes  \varepsilon)  \Delta.
$$

\begin{definition}
Let $H$ be a vector space  over a field $K$. If there exist coassociative comultiplication $\Delta$ and counit $\varepsilon$ on $H$, then $(H, \Delta, \varepsilon)$ is said to be {\it coalgebra}.

{\it Bialgebra} $H$ is an associative algebra with unit on which it is defined a coassociative comultiplication
$$
\Delta : H \to H \otimes H
$$
and counit that are homomorphisms of $K$-algebras.
\end{definition}

An {\it antipode} of the bialgebra $H$ is an anti-homomorphism
$$
S : H \to H
$$
such that
$$
\sum h_i^{(1)} \cdot S \left( h_i^{(2)} \right) = \sum S \left( h_i^{(1)} \right) \cdot h_i^{(2)} = \varepsilon(h) \cdot 1,
$$
where
$$
\Delta(h) = \sum h_i^{(1)} \otimes h_i^{(2)}
$$
and $\cdot$ is the multiplication in $H$.

\begin{definition}
{\it Hopf algebra} is a bialgebra with a unit,
a counit and an antipode.
\end{definition}

\begin{example}
1) Let $G$ be a group, $K[G]$ be a group algebra. Let as define comultiplication $\Delta$, counit $\varepsilon$ and antipode $S$ on elements of $G$:
$$
\Delta(g) = g \otimes g,~~~\varepsilon(g) = 1,~~~s(g)= g^{-1},
$$
and extend by linearity on $K[G]$. We get a cocommutative Hopf algebra.

2) Let $G$ be a finite group. The Hopf algebra $K[G]^*$ has a basis $P_g$, $g \in G$ on which comultiplication and multiplication are defined by
$$
\Delta(P_g) = \sum_h P_h \otimes P_{h^{-1} g},~~~P_g P_h = \delta_{g, h} P_g.
$$
It means that $\{ P_g~|~g \in G \}$ is a set of pairwise orthogonal idempotents sum of which is equal to unit.
Further, counit is defined by equalities
$$
\varepsilon(P_1) = 1,~~~\varepsilon(P_g) = 0,~~g \in G, g \not= 1,
$$
and the antipode is defined by the equality
$$
S(P_g) = P_{g^{-1}}
$$
\end{example}

\subsection{Extensions of Yang-Baxter sets}

Let $X$ be a non-empty set and
$$
R  : X \times X \to X \times X
$$
be a  solution for the Yang-Baxter equation,
$$
R_{12} R_{13} R_{23} = R_{23} R_{13} R_{12}.
$$
(We call such a pair $(X,R)$ a Yang-Baxter set.)
We denote the components of $R$ as $R(x, y) = (\sigma_y(x), \tau_x(y))$ for $x, y \in X$.
If $(X, R^X)$ and $(Y, R^Y)$ are two Yang-Baxter sets then a map $f : X \to Y$ is said to be a {\it morphism} if the following diagram is commutative
$$
 \begin{diagram}
\node{X \times X}
\arrow[2]{e,t}{f\times f}
\arrow{s,l}{R^X}
\node[2]{Y \times Y}\arrow{s,r}{R^Y} \\
\node{X \times X}
\arrow[2]{e,b}{f\times f}
\node[2]{Y \times Y}
\end{diagram}
$$
i.e. for any $x, x' \in X$ holds
$$
R^Y (f \times f) (x, x') = (f \times f) R^X (x, x').
$$
For any $y \in Y$ we can defined its preimage
$$
f^{-1} (y) = \{ x \in X~|~f(x) = y\}.
$$
We will say that $f$ is {\it homogeneous} if cardinalities of all preimages $f^{-1} (y)$ are equal. In this case we can find a set of different elements $y_i$, $i \in I$, such that
$X$ is the disjoint union
$$
X = \coprod_{i \in I} f^{-1} (y_i).
$$
\bigskip
If for some $k \in I$ the following holds
$$
R^X (f^{-1} (y_k) \times f^{-1} (y_k)) \subseteq f^{-1} (y_k) \times f^{-1} (y_k),
$$
then we will say that there is a homomorphism of the solution $(X, R^X)$ to the solution $((Y, y_k), R^Y)$ with the kernel $(f^{-1} (y_k), R_{f^{-1} (y_k)})$.

In some problems it arises a different equivalence relation between the solutions for the Yang-Baxter equation. In the braided set case it was introduced a so-called guitar map 
which transfroms a solution to YBE on~$X$ to a solution of the special kind:
$$
R'(x,y) = (\sigma_y(\tau_{\sigma^{-1}_x(y)}(x)), y).
$$
This transformation was introduced by Soloviev \cite{Sol} and developed by Lebed and Vendramin in \cite{LV}.

\bigskip

\subsection{Extension of braided sets induced by a group structure}
\label{groupext}
Let us recall some ideas and results from \cite{PT}. A solution $S$ for the braid equation
\bea
\label{braid relation}
S_{12} S_{23} S_{12} =S_{23} S_{12} S_{23}
\eea
can be associated with the following solution for the Yang-Baxter equation
\bea
R(x,y)=P S (x,y).\nn
\eea
Let us recall  that $X$ is called a braided set if $X$ is equipped with $S : X^2\rightarrow X^2$ - a solution for the braid equation (\ref{braid relation}). 

\begin{definition}
Let us call a set $X$ with a binary algebraic operation $\triangleleft$ self-distributive if $\triangleleft$ satisfies
$$
(x\triangleleft y)\triangleleft z=(x\triangleleft z)\triangleleft(y\triangleleft z).
$$
\end{definition}

\begin{proposition} \label{prop_distr}
The set $(X,\triangleleft)$ is self-distributive $\Leftrightarrow$ the map $$S_\triangleleft(x,y)\stackrel{\rm def}{=}(y,x\triangleleft y)$$
defines a braided set on $X$.
\end{proposition}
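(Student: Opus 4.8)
The plan is to verify the braid equation \eqref{braid relation} for $S=S_\triangleleft$ by a direct coordinate computation on $X^3$, and to observe that the computation is reversible, so that the braid relation holds if and only if the self-distributivity identity holds pointwise. First I would record how $S_{12}$ and $S_{23}$ act on a general triple: by definition $S_{12}(x,y,z)=(y,\,x\triangleleft y,\,z)$ and $S_{23}(x,y,z)=(x,\,z,\,y\triangleleft z)$, since $S_{ij}$ applies the rule $(a,b)\mapsto(b,a\triangleleft b)$ to the $(i,j)$ factor and fixes the third. There is no subtlety here beyond keeping track of which two slots are being acted on.

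Next I would apply the three maps on the left-hand side in order. Starting from $(x,y,z)$: applying $S_{12}$ gives $(y,\,x\triangleleft y,\,z)$; then $S_{23}$ gives $(y,\,z,\,(x\triangleleft y)\triangleleft z)$; then $S_{12}$ gives $(z,\,y\triangleleft z,\,(x\triangleleft y)\triangleleft z)$. So $S_{12}S_{23}S_{12}(x,y,z)=(z,\,y\triangleleft z,\,(x\triangleleft y)\triangleleft z)$. Doing the same for the right-hand side: $S_{23}$ gives $(x,\,z,\,y\triangleleft z)$; then $S_{12}$ gives $(z,\,x\triangleleft z,\,y\triangleleft z)$; then $S_{23}$ gives $(z,\,y\triangleleft z,\,(x\triangleleft z)\triangleleft(y\triangleleft z))$. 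So $S_{23}S_{12}S_{23}(x,y,z)=(z,\,y\triangleleft z,\,(x\triangleleft z)\triangleleft(y\triangleleft z))$.

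Comparing the two outputs, the first and second coordinates agree automatically (both are $z$ and $y\triangleleft z$), so the braid equation holds at the triple $(x,y,z)$ if and only if the third coordinates coincide, i.e. if and only if $(x\triangleleft y)\triangleleft z=(x\triangleleft z)\triangleleft(y\triangleleft z)$. Quantifying over all $x,y,z\in X$ then yields both implications of the proposition simultaneously: $S_\triangleleft$ defines a braided set exactly when $\triangleleft$ is self-distributive. I do not expect any genuine obstacle; the only thing to be careful about is the ordering convention in the composites $S_{12}S_{23}S_{12}$ and the indexing of the factors on which each $S_{ij}$ acts, so that the two chains of substitutions are carried out consistently.
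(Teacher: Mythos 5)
Your proof is correct: the coordinate computation of both sides of the braid relation is carried out accurately, and the observation that the first two output coordinates agree automatically makes the equivalence with self-distributivity immediate in both directions. The paper states this proposition without proof (it is recalled from the literature on racks and quandles), and your direct verification is exactly the standard argument one would supply.
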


\begin{example}
\label{ex_grdistr}
Any group $G$ with the conjugation operation $x\triangleleft y = y^{-1}xy$ is a self-distributive set. We call such self-distribute sets grouplike.
\end{example}

These observations allow to connect groups with braided sets. In particular this allows to describe extensions of grouplike braided sets. This is principally due to the well developed theory of group extensions defined by group cohomology \cite{Braun}. This idea is exploited in \cite{PT} to describe solutions for the parametric Yang-Baxter equation.

\subsection{Braid group and virtual pure braid group}

The braid group $B_n$ on $n$ strands $n \geq 2$ is generated by elements  $\sigma_1$,  $\sigma_2$, $\ldots$, $\sigma_{n-1}$. The relations of $B_n$ are given by
\bea
\sigma_i \sigma_{i+1} \sigma_i&=&\sigma_{i+1} \sigma_i \sigma_{i+1};\nn\\
\sigma_i \sigma_j&=& \sigma_j \sigma_i\qquad \mbox{if}\qquad |i-j|>1.\nn
\eea
A generator $\sigma_i$ has a geometric interpretation as the braidings of the $i$-th and $(i+1)$-th strands.

The {\it virtual braid group} $VB_n$  introduced in \cite{K}  is generated by the  braid group $B_n = \langle \sigma_1, \sigma_2, \ldots, \sigma_{n-1} \rangle$ and the symmetric group $S_n=\langle \rho_1, \rho_2,\ldots, \rho_{n-1} \rangle$  with the following relations:
\begin{align*}
\sigma_i \sigma_{i+1} \sigma_i&=\sigma_{i+1} \sigma_i \sigma_{i+1} & i=1, 2, \ldots, {n-2}, \\
\sigma_i \sigma_j&=\sigma_j \sigma_i &  |i-j| \geq 2, \\
\rho_i^{2}&=1 &  i=1, 2, \ldots, {n-1},\\
\rho_i \rho_j&= \rho_j \rho_i &  |i-j| \geq 2,\\
\rho_i \rho_{i+1} \rho_i&= \rho_i \rho_{i+1} \rho_i & i=1, 2, \ldots, {n-2},\\
\sigma_i \rho_j&= \rho_j \sigma_i & |i-j| \geq 2 ,\\
\rho_i \rho_{i+1} \sigma_i&= \sigma_{i+1} \rho_i \rho_{i+1} & i=1, 2, \ldots, {n-2}.
\end{align*}

The virtual pure braid group $VP_n$, $n\geq 2$, was introduced in  \cite{B} as the kernel of the homomorphism $VB_n \to S_n$, $\sigma_i \mapsto (i, i+1)$, 
$\rho_i \mapsto (i, i+1)$ for all $i=1, 2, \ldots, i-1$. $VP_n$
 admits a
presentation with the  generators $\lambda_{ij},\ 1\leq i\neq j\leq n,$
and the following relations:
\begin{align}
& \lambda_{ij}\lambda_{kl}=\lambda_{kl}\lambda_{ij} \label{rel},\\
&
\lambda_{ki}\lambda_{kj}\lambda_{ij}=\lambda_{ij}\lambda_{kj}\lambda_{ki}
\label{relation},
\end{align}
where distinct letters stand for distinct indices.

These generators of $VP_n$ can be expressed from the generators of $VB_n$ by the formulas 
\begin{align*}
\lambda_{i,i+1} &= \rho_i \, \sigma_i,\\
  \lambda_{i+1,i} &= \rho_i \, \lambda_{i,i+1} \, \rho_i = \sigma_i \, \rho_i
\end{align*}
for  $i=1, 2, \ldots, n-1$, and
\begin{align*}
\lambda_{i,j} & = \rho_{j-1} \, \rho_{j-2} \ldots \rho_{i+1} \, \lambda_{i,i+1} \, \rho_{i+1} \ldots \rho_{j-2} \, \rho_{j-1}, \\ 
\lambda_{j,i} & =  \rho_{j-1} \, \rho_{j-2} \ldots \rho_{i+1} \, \lambda_{i+1,i} \, \rho_{i+1} \ldots \rho_{j-2} \, \rho_{j-1}
\end{align*} 
for $1 \leq i < j-1 \leq n-1$.

We have decomposition $VB_n = VP_n \rtimes S_n$ and $S_n$ acts on $VP_n$ by the following rules:

\begin{lemma}[\cite{B}] \label{form}
Let $a$ be an element of $\langle \rho_1, \rho_2, \ldots, \rho_{n-1} \rangle$ and $\bar{a}$ is its image in $S_n$ under the isomorphism $\rho_i \mapsto (i,i+1)$, $i = 1, 2, \ldots, n-1$, then for any generator $\lambda_{ij}$ of $VP_n$ the following holds
$$
a^{-1} \lambda_{ij} a = \lambda_{(i)\bar{a}, (j)\bar{a}},
$$
where $(k)\bar{a}$ is the image of $k$ under the action of the permutation $\bar{a}$.
\end{lemma}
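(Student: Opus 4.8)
The plan is to reduce the statement to the case where $a$ is a single Coxeter generator $\rho_k$, and then to settle that case by induction on $|i-j|$ using the defining relations of $VB_n$. For the reduction, let $\mathcal{A}\subseteq\langle\rho_1,\dots,\rho_{n-1}\rangle$ be the set of elements $a$ for which $a^{-1}\lambda_{ij}a=\lambda_{(i)\bar a,(j)\bar a}$ holds for every generator $\lambda_{ij}$ of $VP_n$. Clearly $1\in\mathcal{A}$, and if $a,b\in\mathcal{A}$ then, since $S_n$ acts on indices on the right and $\overline{ab}=\bar a\bar b$,
$$(ab)^{-1}\lambda_{ij}(ab)=b^{-1}\left(a^{-1}\lambda_{ij}a\right)b=b^{-1}\lambda_{(i)\bar a,(j)\bar a}b=\lambda_{((i)\bar a)\bar b,((j)\bar a)\bar b}=\lambda_{(i)\overline{ab},(j)\overline{ab}},$$
so $ab\in\mathcal{A}$. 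Thus $\mathcal{A}$ is a submonoid, and since $\rho_k^2=1$ the group $\langle\rho_1,\dots,\rho_{n-1}\rangle$ is generated as a monoid by the $\rho_k$; hence it suffices to show each $\rho_k\in\mathcal{A}$, i.e. (using $\rho_k^{-1}=\rho_k$) that $\rho_k\lambda_{ij}\rho_k=\lambda_{s_k(i),s_k(j)}$ with $s_k=(k,k+1)$, for all $i\neq j$ and all $k$.

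\emph{Base case $|i-j|=1$.} Here $\lambda_{i,i+1}=\rho_i\sigma_i$ and $\lambda_{i+1,i}=\sigma_i\rho_i$, so the claim follows from a short case analysis on the position of $k$ relative to $\{i,i+1\}$. If $k\notin\{i-1,i,i+1\}$ then $\rho_k$ commutes with $\rho_i$ and with $\sigma_i$ by the far-commutation relations of $VB_n$, while $s_k$ fixes $i$ and $i+1$; if $k=i$ then $\rho_i(\rho_i\sigma_i)\rho_i=\sigma_i\rho_i=\lambda_{i+1,i}$ and $s_i$ interchanges $i$ and $i+1$; and the adjacent cases $k=i-1$, $k=i+1$ follow from the mixed relation $\rho_i\rho_{i+1}\sigma_i=\sigma_{i+1}\rho_i\rho_{i+1}$ together with $\rho_i^2=1$, which rewrites $\rho_{i\pm1}(\rho_i\sigma_i)\rho_{i\pm1}$ as $\rho_{i'}\sigma_{i'}$ for the shifted index $i'$, matching $\lambda_{s_k(i),s_k(i+1)}$. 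The computation for $\lambda_{i+1,i}$ is symmetric.

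\emph{Inductive step.} For $i<j-1$ the definition yields the one-step identities $\lambda_{i,j}=\rho_{j-1}\lambda_{i,j-1}\rho_{j-1}$ and $\lambda_{j,i}=\rho_{j-1}\lambda_{j-1,i}\rho_{j-1}$; in either orientation the larger of the two indices drops by one. Hence $\rho_k\lambda_{i,j}\rho_k=\rho_k\rho_{j-1}\lambda_{i,j-1}\rho_{j-1}\rho_k$. When $|k-(j-1)|\geq 2$ the generators $\rho_k$ and $\rho_{j-1}$ commute (and then $s_k$ fixes $j$), so this equals $\rho_{j-1}(\rho_k\lambda_{i,j-1}\rho_k)\rho_{j-1}$; applying the induction hypothesis to $\lambda_{i,j-1}$, then the one-step identity in reverse, and tracking the action of $s_k$ on the remaining indices, gives $\lambda_{s_k(i),s_k(j)}$. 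The finitely many leftover values $k\in\{j-2,j-1,j\}$, where $\rho_k$ meets the end of the $\rho$-chain non-trivially, are handled by first normalizing the product $\rho_k\rho_{j-1}\cdots$ with the Coxeter relations among the $\rho$'s (and the mixed relation once the $\sigma$'s are reached), which reduces them to the base case or to a shorter instance already covered.

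\emph{Main obstacle.} The essential difficulty lies entirely in the boundary cases of the last two steps — when the support $\{k,k+1\}$ of $\rho_k$ meets $\{i,j\}$ in a single point, or overlaps the connecting chain $\rho_{j-1}\cdots\rho_{i+1}$ occurring in the definition of $\lambda_{ij}$. These require repeated and careful use of the mixed relation $\rho_i\rho_{i+1}\sigma_i=\sigma_{i+1}\rho_i\rho_{i+1}$ and of the Coxeter and braid commutations, and one must keep precise track of which index is moved by $s_k$ so as to land on $\lambda_{(i)\bar a,(j)\bar a}$ rather than a transposed variant. (One could instead observe geometrically that $\lambda_{ij}$ is read off a virtual-braid diagram on which conjugation by $\rho_k$ merely relabels strands $k$ and $k+1$; we prefer the purely algebraic argument to keep the proof self-contained.)
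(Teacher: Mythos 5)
The paper itself gives no proof of this lemma: it is quoted verbatim from the reference \cite{B} (where it is established by essentially the same kind of computation in the presentation of $VB_n$), so there is no in-paper argument to compare against. Judged on its own, your proof is correct in structure and in all the points you actually carry out. The reduction to a single Coxeter generator via the submonoid $\mathcal{A}$ is clean and exploits $\rho_k^2=1$ correctly, and the base case $|i-j|=1$ is right: the only nontrivial subcase is $k=i-1$, where the identity $\rho_{i-1}\rho_i\sigma_i\rho_{i-1}=\rho_i\rho_{i-1}\sigma_{i-1}\rho_i=\lambda_{i-1,i+1}$ does follow from the mixed relation $\rho_{i-1}\rho_i\sigma_{i-1}=\sigma_i\rho_{i-1}\rho_i$ together with the Coxeter braid relation (note that the subcase $k=i+1$ needs no relation at all, being literally the definition of $\lambda_{i,i+2}$). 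The one place where your write-up is only a sketch is the boundary of the inductive step, the values $k\in\{j-2,j-1,j\}$: of these, $k=j-1$ and $k=j$ are immediate ($\rho_{j-1}$ cancels against the outer conjugating letters, and $\rho_j$ realizes the one-step identity for $\lambda_{i,j+1}$), while $k=j-2$ genuinely requires rewriting $\rho_{j-2}\rho_{j-1}=\rho_{j-1}\rho_{j-2}\rho_{j-1}\rho_{j-2}$ and then applying the induction hypothesis twice to shorter instances; this does terminate, but you should state the induction order explicitly (e.g.\ on $j-i$, invoking the hypothesis only for strictly smaller gaps) to make clear that the ``shorter instance already covered'' is really already covered. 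With that bookkeeping made explicit the argument is complete and self-contained.
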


The group $VB_n$ contains a subgroup $VP_n^+$ that is generated by elements $\lambda_{ij},\ 1\leq i < j\leq n,$ moreover the map $\lambda_{ji} \mapsto \lambda_{ij}^{-1}$, $1\leq i < j\leq n,$ defines a homomorphism of $VP_n$ to $VP_n^+$. 

\begin{example} $VP_3$ is generated by elements
$$
\lambda_{12},~~\lambda_{21},~~\lambda_{13},~~\lambda_{23},~~\lambda_{31},~~\lambda_{32},
$$
and is defined by the relations
$$
\lambda_{ki}\lambda_{kj}\lambda_{ij}=\lambda_{ij}\lambda_{kj}\lambda_{ki}
$$
for distinct $i, j, k \in \{1, 2, 3 \}.$
It contains 
$$
VP_3^+ = \langle \lambda_{12}, \lambda_{13}, \lambda_{23}~|~  \lambda_{12} \lambda_{13} \lambda_{23} = \lambda_{23} \lambda_{13}  \lambda_{12} \rangle
$$
that is a group with one defining relation.
\end{example}

\medskip

It is possible to define another epimorphism $\mu: VB_n \to S_n$ as follows:
$$
\mu(\sigma_i)=1, \; \mu(\rho_i)=\rho_i, \;i=1,2,\dots, n-1\, ,
$$
where $S_n$ is generated by $\rho_i$ for $i=1,2,\dots, n-1$.
Let us denote by $H_n$ the normal closure of $B_n$ in $VB_n$.
It is evident that $\ker \mu$ coincides with $H_n$.
Let us define elements:
$$
x_{i,i+1}=\sigma_i,~~x_{i+1,i}=\rho_i \sigma_i \rho_i =\rho_i x_{i,i+1} \rho_i,
$$
for $i= 1, 2, \ldots, n-1$, and 
$$
x_{i,j}=\rho_{j-1} \cdots \rho_{i+1} \sigma_i \rho_{i+1} \cdots \rho_{j-1},
$$
$$
x_{j,i}=\rho_{j-1} \cdots \rho_{i+1} \rho_i \sigma_i \rho_i \rho_{i+1} \cdots \rho_{j-1},
$$
for $1 \le i < j-1 \le n-1$.

The group $H_n$ admits a presentation with the generators $x_{k,\, l},$ $1 \leq k \neq l \leq
n$,
and the defining relations:
\begin{equation} \label{eq40}
x_{i,j} \,  x_{k,\, l} = x_{k,\, l}  \, x_{i,j},
\end{equation}
\begin{equation} \label{eq41}
x_{i,k} \,  x_{k,j} \,  x_{i,k} =  x_{k,j} \,  x_{i,k} \, x_{k,j},
\end{equation}
where  distinct letters stand for distinct indices.

This presentation was found in \cite{R} (see also \cite{BB}). There is a decomposition
$VB_n = H_n \rtimes S_n$, where  $S_n=\langle \rho_1, \dots, \rho_{n-1} \rangle$ acts on the set
$\{x_{i,j} \, , \;  1 \le i \not= j \le n \}$ by permutation of indices and we have the next analogous of Lemma \ref{form}.

\begin{lemma}[\cite{BB}] \label{form1}
Let $a$ be an element of $\langle \rho_1, \rho_2, \ldots, \rho_{n-1} \rangle$ and $\bar{a}$ is its image in the symmetric group $S_n$ under the isomorphism $\rho_i \mapsto (i,i+1)$, $i = 1, 2, \ldots, n-1$, then for any generator $x_{ij}$ of $H_n$ the following holds
$$
a^{-1} x_{ij} a = x_{(i)\bar{a}, (j)\bar{a}},
$$
where $(k)\bar{a}$ is the image of $k$ under the action of the permutation $\bar{a}$.
\end{lemma}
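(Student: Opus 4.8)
The plan is to reduce the statement to conjugation by a single generator $\rho_k$ and then verify that case directly from the defining words of the $x_{ij}$ and the relations of $VB_n$.

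First I would note that it suffices to treat $a=\rho_k$. Since each $\rho_k$ is an involution, for $a=\rho_{k_1}\rho_{k_2}\cdots\rho_{k_m}$ one has
$$a^{-1}x_{ij}a=\rho_{k_m}\cdots\rho_{k_1}\,x_{ij}\,\rho_{k_1}\cdots\rho_{k_m},$$
so conjugation by $a$ is the composite, in the appropriate order, of conjugations by $\rho_{k_1},\dots,\rho_{k_m}$; correspondingly $\bar a=(k_1,k_1{+}1)(k_2,k_2{+}1)\cdots(k_m,k_m{+}1)$ in $S_n$. Hence, once we know $\rho_k\,x_{ij}\,\rho_k=x_{(i)(k,k+1),(j)(k,k+1)}$ for all $i\neq j$ and all $k$, a straightforward induction, tracking indices with the right action $(i)\bar a$ used in the statement, yields the general formula. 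It is also convenient to record that for any permutation $\pi\in S_n$ the rule $x_{ij}\mapsto x_{\pi(i),\pi(j)}$ carries relations \eqref{eq40} and \eqref{eq41} to relations of the same form, hence extends to an automorphism $\phi_\pi$ of $H_n$; then the lemma says exactly that conjugation by $a$ equals $\phi_{\bar a}$, and the induction is consistent regardless of the chosen word for $a$ because each $\phi_{(k,k+1)}$ is genuinely realized by conjugation.

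It then remains to prove the single-generator identity. Writing, uniformly, $x_{i,j}=w_{ij}\,\sigma_{m}\,w_{ij}^{-1}$ when $i<j$ and $x_{i,j}=w_{ij}\,\rho_{m}\sigma_{m}\rho_{m}\,w_{ij}^{-1}$ when $i>j$, with $m=\min(i,j)$ and $w_{ij}=\rho_{\max(i,j)-1}\cdots\rho_{m+1}$, I would run a case analysis on the position of $\{k,k+1\}$ relative to the interval $\{m,\dots,\max(i,j)\}$. If $\{k,k+1\}$ is disjoint from this interval, then $\rho_k$ commutes past every letter of $w_{ij}$ and past $\sigma_{m}$ by the far-commutation relations $\rho_a\rho_b=\rho_b\rho_a$ and $\sigma_a\rho_b=\rho_b\sigma_a$ for $|a-b|\geq 2$, while $(k,k+1)$ fixes both $i$ and $j$, so the two sides agree. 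If $k$ and $k+1$ both lie strictly inside the interval, then $(k,k+1)$ again fixes $i$ and $j$, and one checks, using the braid relation $\rho_a\rho_b\rho_a=\rho_b\rho_a\rho_b$ together with far-commutation, that $\rho_k$ commutes with $x_{ij}$. The essential cases are those in which $k$ or $k+1$ coincides with $i$ or $j$: there one uses the mixed relation $\rho_i\rho_{i+1}\sigma_i=\sigma_{i+1}\rho_i\rho_{i+1}$ (equivalently $\rho_i\rho_{i+1}\sigma_i\rho_{i+1}\rho_i=\sigma_{i+1}$) to slide $\rho_k$ into or out of $w_{ij}$, after which one reads off that $i$, respectively $j$, has been replaced by its image under $(k,k+1)$, exactly as claimed.

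The step I expect to be the main obstacle is precisely this last family of cases: one must check that absorbing $\rho_k$ into $w_{ij}$ by repeated application of $\rho_i\rho_{i+1}\sigma_i=\sigma_{i+1}\rho_i\rho_{i+1}$ and the commutation relations reproduces, letter for letter, the word defining $x_{(i)(k,k+1),(j)(k,k+1)}$, while keeping track of the four subcases $k=i$, $k+1=i$, $k=j$, $k+1=j$ and of the inner $\rho_{m}$'s that distinguish $x_{i,j}$ from $x_{j,i}$. Once this careful but routine verification is complete, the first step assembles the general statement; alternatively, the presentation of $H_n$ together with this $S_n$-action is already established in \cite{R} and \cite{BB}.
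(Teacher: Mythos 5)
Your argument is correct, but note that the paper itself offers no proof of this lemma: it is quoted verbatim from the cited reference \cite{BB} (as is its companion, Lemma \ref{form}), so there is no in-paper argument to compare against. Your direct verification is essentially the standard one from that source. The reduction to a single generator $\rho_k$ is sound (conjugation satisfies $x^{bc}=(x^b)^c$, which matches the right action $(i)\bar a$ used in the statement, and the answer depends only on the group element $a$, not on the chosen word), and your uniform expression $x_{i,j}=w_{ij}\sigma_m w_{ij}^{-1}$, resp.\ $w_{ij}\rho_m\sigma_m\rho_m w_{ij}^{-1}$, with $w_{ij}=\rho_{\max(i,j)-1}\cdots\rho_{m+1}$, does reproduce the defining words. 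The case analysis closes: in the ``strictly inside'' case one has $\rho_k w_{ij}=w_{ij}\rho_{k+1}$ by one braid relation plus far-commutations, and $\rho_{k+1}$ then commutes with the core $\sigma_m$ (resp.\ $\rho_m\sigma_m\rho_m$) since $k+1\geq m+2$; in the boundary cases the identities $\rho_i\rho_{i+1}\sigma_i\rho_{i+1}\rho_i=\sigma_{i+1}$ and $\rho_{i-1}\sigma_i\rho_{i-1}=\rho_i\sigma_{i-1}\rho_i$ do exactly the absorption you describe, including the $x_{j,i}$ words with the inner $\rho_m$'s (e.g.\ $\rho_i\rho_{i+1}\rho_i\sigma_i\rho_i\rho_{i+1}\rho_i=\rho_{i+1}\sigma_{i+1}\rho_{i+1}$ follows from the same two relations). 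The remark that $x_{ij}\mapsto x_{\pi(i),\pi(j)}$ preserves the presentation is harmless but not needed, since the conjugation formula is established on generators directly. In short: a complete and correct proof of a statement the paper delegates to the literature.
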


\bigskip

\section{Braided sets and representations  of $VB_n$}
\label{sec-Br}

\subsection{Solutions of the YBE and representations of $VP_n$}
As above  $P(x, y) = (y, x)$ is a transposition in  $X^2 \to X^2$. For $n>2$ define the maps $P_{ij}: X^n \to X^n$, $1 \leq i < j \leq n$, which act as $P$ on $i$-th and $j$-th factors. It is easy to see that the group generated by $P_{i,i+1}$, $i=1, 2, \ldots, n-1$, is isomorphic to symmetric group $S_n$ and there is an isomorphism of $\langle \rho_1, \rho_2, \ldots, \rho_{n-1} \rangle$ to $\langle P_{12}, P_{23}, \ldots, P_{n-1,n} \rangle$, which is defined by the rule $\rho_i \mapsto P_{i,i+1}$.

Suppose that $(X, R)$ is a Yang--Baxter set with invertible map $R : X^2 \to X^2$. For any natural $n \ge 2$ we define the maps $R_{ij} : X^n \to X^n$ by the formulas
$$
R_{i,i+1} = \id^{i-1} \times R \times \id^{n-i-1},~~R_{i+1,i} = P_{i,i+1} R_{i,i+1} P_{i,i+1}
$$
for  $i=1, 2, \ldots, n-1$, and
\begin{align*}
R_{i,j} & = P_{j-1,j} \, P_{j-2,j-1} \ldots P_{i+1,i+2} \, R_{i,i+1} \, P_{i+1,i+2} \ldots P_{j-2,j-1} \, P_{j-1,j}, \\ 
R_{j,i} & = P_{j-1,j} \, P_{j-2,j-1} \ldots P_{i+1,i+2} \, R_{i+1,i} \,  P_{i+1,i+2} \ldots P_{j-2,j-1} \, P_{j-1,j},
\end{align*} 
for $1 \leq i < j-1 \leq n-1$.

\begin{proposition} \label{virt}
Suppose that $(X, R)$ is a Yang--Baxter set with invertible $R: X \times X \to X \times X$. Then there is a representation
$$
\varphi_n^{R}: VP_n \to \Sym(X^{n}),~~\mbox{for any}~n \geq 2.
$$
\end{proposition}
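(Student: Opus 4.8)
The plan is to define $\varphi_n^R$ on the generators $\lambda_{ij}$ of $VP_n$ by $\varphi_n^R(\lambda_{ij})=R_{ij}$, where the $R_{ij}\colon X^n\to X^n$ are the maps built from $R$ and the coordinate transpositions $P_{k,k+1}$ in the paragraph preceding the statement. Since $R$ is invertible and each $P_{k,k+1}$ is an involution, every $R_{ij}$ is a bijection of $X^n$, so this assignment takes values in $\Sym(X^n)$. By the presentation of $VP_n$ with generators $\lambda_{ij}$ and relations \eqref{rel}, \eqref{relation} it then suffices to verify that the $R_{ij}$ satisfy the two families of defining relations, namely \eqref{rel} in the form $R_{ij}R_{kl}=R_{kl}R_{ij}$ for four distinct indices, and \eqref{relation} in the form $R_{ki}R_{kj}R_{ij}=R_{ij}R_{kj}R_{ki}$ for three distinct indices $i,j,k$. (For $n=2$ there are no such relations, so nothing is to be checked.)

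For relation \eqref{rel} the key observation is that each $R_{ab}$ is \emph{supported} on the two tensor factors labelled $a$ and $b$, i.e.\ acts as the identity on the remaining factors. Indeed, for $i<j$ the defining formulas give $R_{i,j}=w\,R_{i,i+1}\,w^{-1}$ and $R_{j,i}=w\,R_{i+1,i}\,w^{-1}$ with $w=P_{j-1,j}P_{j-2,j-1}\cdots P_{i+1,i+2}$; here $w$ is a permutation of the tensor factors fixing the $i$-th factor and carrying the $(i+1)$-st factor to the $j$-th, while $R_{i,i+1}$ and $R_{i+1,i}$ act as the identity off the factors $\{i,i+1\}$, so their $w$-conjugates act as the identity off $\{i,j\}$. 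Two maps supported on disjoint sets of factors commute (decompose $X^n$ as the product of the two supports and the rest), which yields \eqref{rel} whenever $\{i,j\}\cap\{k,l\}=\emptyset$.

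For relation \eqref{relation}, all three maps $R_{ki},R_{kj},R_{ij}$ are supported on the three factors $\{i,j,k\}$, so the identity concerns only those factors and we may assume $n=3$ and $\{i,j,k\}=\{1,2,3\}$. For the ordering $(i,j,k)=(2,3,1)$ relation \eqref{relation} reads $R_{12}R_{13}R_{23}=R_{23}R_{13}R_{12}$, which is exactly the Yang--Baxter equation \eqref{YB} and so holds by hypothesis. Every other ordering is obtained by conjugating this identity by a suitable element $P$ of $\langle P_{12},P_{23}\rangle\cong S_3$: a short case-by-case check on the six maps $R_{ab}$ ($a\neq b$ in $\{1,2,3\}$) shows that conjugation by $P$ permutes their index pairs according to the corresponding permutation of $\{1,2,3\}$ --- the $\Sym(X^3)$-analogue of Lemma \ref{form} --- and conjugating the Yang--Baxter identity accordingly produces precisely the remaining instances of \eqref{relation}. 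Thus all defining relations hold, $\varphi_n^R$ is a well-defined homomorphism, and $\varphi_n^R(\lambda_{ij})=R_{ij}$.

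Equivalently, one may first check that $\sigma_i\mapsto P_{i,i+1}R_{i,i+1}$ and $\rho_i\mapsto P_{i,i+1}$ define a representation $VB_n\to\Sym(X^n)$ --- the braid relation on the images of the $\sigma_i$ being the Yang--Baxter equation for $R$, the relations among the $\rho_i$ expressing that the $P_{i,i+1}$ generate a copy of $S_n$, and the mixed relations reducing (exactly as in the support argument above) to the fact that conjugation by the $P$'s permutes tensor factors correctly --- and then restrict to $VP_n$. In either route the only genuinely delicate point is this last piece of bookkeeping: since $R$ is not symmetric, one must track which of the two tensor factors plays the role of the \emph{first} one under each coordinate permutation, which is precisely why the construction distinguishes $R_{i,j}$ from $R_{j,i}$.
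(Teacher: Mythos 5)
Your proposal is correct and follows the same construction as the paper: send $\lambda_{ij}\mapsto R_{ij}$, note each $R_{ij}$ is a bijection, dispose of the commutation relations by the support argument, and reduce the mixed three-index relations to the single Yang--Baxter identity $R_{12}R_{13}R_{23}=R_{23}R_{13}R_{12}$. The one point of divergence is how that reduction is justified: the paper's proof cites the Lifting theorem of Bardakov--Wu to propagate the $n=3$ identity to all instances with distinct indices, whereas you carry out the propagation directly by conjugating with the coordinate transpositions $P_{ij}$ and using the analogue of Lemma 2.6 for the maps $R_{ab}$ --- which is exactly the alternative argument the authors sketch in the remark immediately after their proof. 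Your route has the advantage of being self-contained (no appeal to an external theorem), at the cost of the "short case-by-case check" that conjugation by the $P$'s permutes the index pairs of the $R_{ab}$ as expected; that check is routine from the defining formulas for $R_{i,j}$ and $R_{j,i}$, but it is the one step you assert rather than verify, and it is also where the asymmetry of $R$ (the distinction between $R_{i,j}$ and $R_{j,i}$) must be tracked, as you correctly flag at the end.
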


\begin{proof}
We proceed by induction on $n$. For $n=2$ the group $VP_2 = \langle \lambda_{12}, \lambda_{21} \rangle$ is isomorphic to $F_2$. The map $\lambda_{12} \to R_{12}$, $\lambda_{21} \to R_{21}$ defines a representation 
$$
\varphi_2^{R}: VP_2 \to \Sym(X^{2}). 
$$
For $n>2$ define the map  $\lambda_{kl} \mapsto R_{kl}$, $1 \leq k\not=l \leq n$. To prove that this map  defines the expected representation
$$
\varphi_n^{R}: VP_n \to \Sym(X^{n}),
$$
it is enough to show that in $\Sym(X^{n})$  the following relations hold
\begin{align}
& R_{ij} \, R_{kl}=R_{kl} \, R_{ij} \label{rel1},\\
&
R_{ki} \, R_{kj} \, R_{ij}=R_{ij} \, R_{kj} \, R_{ki}
\label{relation1},
\end{align}
where distinct letters stand for distinct indices.

By the construction, the map $R_{ij}$ acts as $R$ on the $i$-th and $j$-th factors and as identity on others. Hence the commutativity relations hold.
On the other side since $(X, R)$ is a Yang--Baxter set we get
$$
R_{12} \, R_{13} \, R_{23}=R_{23} \, R_{13} \, R_{12}.
$$
 From Lifting theorem \cite{BW} it follows that all relations 
 (\ref{relation1}) hold for arbitrary $n$.
\end{proof}

\begin{remark}
To prove that the relation 
$$
R_{12} \, R_{13} \, R_{23}=R_{23} \, R_{13} \, R_{12}
$$
induces all relations  (\ref{relation1}) instead Lifting theorem we use  conjugations by elements $P_{ij}$ for which the analog of Lemma \ref{form} holds. 
\end{remark}

\subsection{Solutions of the BE and representations of $H_n$}
It is well known that an invertible solution $(X, S)$ of the BE gives a representation $B_n \to \Sym(X^n)$. We prove some generalization of this fact. To do this let us introduce 
the maps $S_{ij} : X^n \to X^n$, $1 \le i \not= j \le n$ by the rules
\begin{align*}
S_{i,i+1} &=  S_i,\\
S_{i+1,i} &= P_{i,i+1} \, S_{i,i+1} \, P_{i,i+1},
\end{align*}
for  $i=1, 2, \ldots, n-1$, and
\begin{align*}
S_{i,j} & = P_{j-1,j} \, P_{j-2,j-1} \ldots P_{i+1,i+2} \, S_{i,i+1} \, P_{i+1,i+2} \ldots P_{j-2,j-1} \, P_{j-1,j}, \\ 
S_{j,i} & = P_{j-1,j} \, P_{j-2,j-1} \ldots P_{i+1,i+2} \, S_{i+1,i} \,  P_{i+1,i+2} \ldots P_{j-2,j-1} \, P_{j-1,j},
\end{align*} 
for $1 \leq i < j-1 \leq n-1$.

The proof of the next proposition is the same as the proof of Proposition \ref{virt}

\begin{proposition}
Suppose that $(X, S)$ is a braided set with invertible $S: X \times X \to X \times X$. Then the map $x_{\ij} \to S_{ij}$ defines a representation
$$
\varphi_n^{S}: H_n \to \Sym(X^{n}),~~\mbox{for any}~n \geq 2.
$$
\end{proposition}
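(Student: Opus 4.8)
The plan is to run the proof of Proposition~\ref{virt} essentially verbatim, with the presentation \eqref{rel}--\eqref{relation} of $VP_n$ replaced by the presentation \eqref{eq40}--\eqref{eq41} of $H_n$ recalled above. Since $S$ is invertible and each $P_{pq}$ is an involution of $X^n$, every $S_{ij}$ is a bijection of $X^n$, so the assignment $x_{ij}\mapsto S_{ij}$ takes values in $\Sym(X^{n})$; what has to be verified is only that the defining relations of $H_n$ hold among the operators $S_{ij}$.

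For the commutation relations \eqref{eq40} one first checks, by telescoping the conjugating transpositions exactly as for the $R_{ij}$ in Proposition~\ref{virt}, that $S_{ij}$ acts as $S$ on the $i$-th and $j$-th coordinates of $X^n$ and as the identity on all the others. Hence, when the index sets $\{i,j\}$ and $\{k,l\}$ are disjoint, the operators $S_{ij}$ and $S_{kl}$ act on disjoint coordinates and therefore commute, which is \eqref{eq40}.

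For the braid relations \eqref{eq41} the base case is $(i,k,j)=(1,2,3)$, where $S_{12}=S_1$ and $S_{23}=S_2$, so $S_{12}S_{23}S_{12}=S_{23}S_{12}S_{23}$ is precisely the braid equation \eqref{braid relation} that $(X,S)$ is assumed to satisfy. To pass to general indices one establishes the operator analogue of Lemma~\ref{form1}: writing $P_a\in\langle P_{12},\dots,P_{n-1,n}\rangle$ for the image of $a\in\langle\rho_1,\dots,\rho_{n-1}\rangle$ under $\rho_i\mapsto P_{i,i+1}$, and $\bar a$ for the corresponding permutation, one has $P_a^{-1}S_{ij}P_a=S_{(i)\bar a,(j)\bar a}$. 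This is immediate from the defining formulas for the $S_{ij}$, using only that the $P_{i,i+1}$ satisfy the relations of $S_n$ and that $P_{pq}$ commutes with any operator supported away from coordinates $p$ and $q$. Choosing $a$ with $\bar a\colon(1,2,3)\mapsto(i,k,j)$ and conjugating the base relation by $P_a$ then yields \eqref{eq41} for every triple of distinct indices $i,k,j$. Alternatively, exactly as in Proposition~\ref{virt}, one may simply invoke the Lifting theorem of \cite{BW} to deduce all of \eqref{eq41} from the single relation $S_{12}S_{23}S_{12}=S_{23}S_{12}S_{23}$.

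The only point requiring care is the index bookkeeping in the operator conjugation lemma: verifying that the nested conjugations by the $P_{pq}$ appearing in the definition of $S_{ij}$ compose so that the resulting operator is supported exactly on coordinates $i$ and $j$, and that this is compatible with the permutation action on indices described after Lemma~\ref{form1}. Conceptually this is identical to the (suppressed) verification underlying Proposition~\ref{virt}, so once it is granted the proposition follows; the invertibility of $S$ is used precisely to guarantee that each $S_{ij}$, and hence the image of the representation, lies in $\Sym(X^{n})$.
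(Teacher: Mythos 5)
Your proposal is correct and follows essentially the same route as the paper, which simply declares the proof to be identical to that of Proposition~\ref{virt}: commutativity via disjoint supports of the $S_{ij}$, and the braid relations \eqref{eq41} obtained from the single base case $S_{12}S_{23}S_{12}=S_{23}S_{12}S_{23}$ either by conjugation with the $P_{ij}$ (the analogue of Lemma~\ref{form1}, as in the paper's remark) or by the Lifting theorem of \cite{BW}. You merely spell out the details that the paper leaves implicit.
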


\begin{corollary}
Suppose that $(X, S)$  is a braided set with invertible $S: X \times X \to X \times X$, or $(X, R)$  is a Yang-Baxter set with invertible $R: X \times X \to X \times X$.
Then there is a representation
$$
\psi_n: VB_n \to \Sym(X^{n}),~~\mbox{for any}~n \geq 2,
$$
which is an extension of the representation $\varphi_n^{S}$ and $\varphi_n^{R}$.
\end{corollary}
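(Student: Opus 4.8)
The plan is to glue $\varphi_n^{R}$ (respectively $\varphi_n^{S}$) to the permutation action of $S_n$ on $X^{n}$ along the semidirect product decompositions $VB_n = VP_n \rtimes S_n$ and $VB_n = H_n \rtimes S_n$. Recall from the beginning of Section~\ref{sec-Br} that $\rho_i \mapsto P_{i,i+1}$ defines a faithful representation $\pi_n \colon S_n \to \Sym(X^{n})$. Since $VB_n$ is the semidirect product of $VP_n$ by $S_n$, the formula $\psi_n(w\,a) = \varphi_n^{R}(w)\,\pi_n(a)$ for $w\in VP_n$, $a\in S_n$, defines a homomorphism $VB_n \to \Sym(X^{n})$ if and only if the pair $(\varphi_n^{R},\pi_n)$ is equivariant, that is
$$
\varphi_n^{R}\big(a^{-1} w\, a\big) \;=\; \pi_n(a)^{-1}\,\varphi_n^{R}(w)\,\pi_n(a),\qquad a\in S_n,\ w\in VP_n .
$$
As both sides are multiplicative in $w$ and in $a$, it is enough to check this for $w=\lambda_{ij}$ and $a=\rho_k$; by Lemma~\ref{form} the left-hand side is then $\varphi_n^{R}(\lambda_{(i)(k,k+1),\,(j)(k,k+1)}) = R_{(i)(k,k+1),\,(j)(k,k+1)}$, so what remains is the identity
$$
P_{k,k+1}\,R_{ij}\,P_{k,k+1} \;=\; R_{(i)(k,k+1),\,(j)(k,k+1)} .
$$

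I would then verify this identity directly from the definitions of the $R_{ij}$. These are built from $R_{i,i+1}=\id^{i-1}\times R\times\id^{n-i-1}$ and $R_{i+1,i}=P_{i,i+1}R_{i,i+1}P_{i,i+1}$ by conjugating with the word $P_{j-1,j}P_{j-2,j-1}\cdots P_{i+1,i+2}$, which is exactly the permutation operator carrying the two active strands into adjacent position. Hence conjugation of $R_{ij}$ by $P_{k,k+1}$ just relabels the two active indices by the transposition $(k,k+1)$ — this is the analogue of Lemma~\ref{form} for the operators $P_{ij}$ mentioned in the Remark after Proposition~\ref{virt}, and a short case analysis (whether $\{k,k+1\}$ meets $\{i,j\}$ in zero, one, or two points) makes it explicit. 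This establishes the displayed identity, so $\psi_n$ is a well-defined homomorphism, and by construction $\psi_n|_{VP_n}=\varphi_n^{R}$.

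The braided case is literally the same argument with $H_n$, the generators $x_{ij}$, the maps $S_{ij}$, the decomposition $VB_n=H_n\rtimes S_n$ and Lemma~\ref{form1} in place of their $VP_n$-counterparts; it produces $\psi_n$ with $\psi_n|_{H_n}=\varphi_n^{S}$. (Since $R=PS$ and the operators $R_{ij}$, $S_{ij}$ differ only by the transpositions already lying in $\pi_n(S_n)$, the two constructions are moreover consistent, but for the statement it suffices that each hypothesis separately yields an extension.)

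I expect the only step with genuine content to be the equivariance identity $P_{k,k+1}R_{ij}P_{k,k+1}=R_{(i)(k,k+1),(j)(k,k+1)}$: one has to check that the specific words of transpositions used to define $R_{i,j}$ and $R_{j,i}$ are precisely those for which the family $\{R_{ij}\}$ transforms under $P$-conjugation the way the generators $\lambda_{ij}$ transform under the $S_n$-action of Lemma~\ref{form}. Everything else is the universal property of the semidirect product together with the already-established Propositions giving $\varphi_n^{R}$ and $\varphi_n^{S}$.
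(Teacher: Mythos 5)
Your argument is correct, but it is organized differently from the paper's. The paper defines $\psi_n$ directly on the generators of the presentation of $VB_n$ by $\sigma_i\mapsto S_{i,i+1}$, $\rho_i\mapsto P_{i,i+1}$, quotes the known representations of $B_n$ and $S_n$ for the braid and symmetric relations, and then only verifies the two mixed relations; the nontrivial one, $\rho_i\rho_{i+1}\sigma_i=\sigma_{i+1}\rho_i\rho_{i+1}$, reduces to $S_{i,i+1}^{P_{i+1,i+2}}=S_{i+1,i+2}^{P_{i,i+1}}$, i.e.\ to the conjugation rule for a single adjacent pair of indices. The identification $\psi_n|_{H_n}=\varphi_n^S$ is then read off from $VB_n=H_n\rtimes S_n$, and the Yang--Baxter case is obtained via $S=PR$. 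You instead build $\psi_n$ by the universal property of the semidirect product, checking $S_n$-equivariance of $\varphi_n^R$ on the generators $\lambda_{ij}$ of $VP_n$ through Lemma~\ref{form}; this requires the identity $P_{k,k+1}R_{ij}P_{k,k+1}=R_{(i)(k,k+1),(j)(k,k+1)}$ for \emph{all} pairs $(i,j)$ and all $k$, not just adjacent ones. Both arguments ultimately rest on the same fact --- that the family $\{R_{ij}\}$ (resp.\ $\{S_{ij}\}$) transforms under conjugation by the $P_{k,k+1}$ exactly as the $\lambda_{ij}$ (resp.\ $x_{ij}$) transform under the $S_n$-action of Lemmas~\ref{form} and~\ref{form1} --- and neither writes out the case analysis in full; yours needs it in greater generality, so the paper's route is computationally lighter. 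What your version buys is symmetry: it treats the $R$- and $S$-cases in parallel and makes $\psi_n|_{VP_n}=\varphi_n^R$ explicit, whereas the paper records only $\psi_n|_{H_n}=\varphi_n^S$ and leaves the $\varphi_n^R$ half to the remark that $S=PR$. Your reduction of the equivariance condition to generators of $S_n$ is legitimate (multiplicativity in $a$ uses normality of $VP_n$), so there is no gap.
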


\begin{proof}
If we have a braided set $(X, S)$ with invertible $S$, then define $\psi_n$ on the generators of $VB_n$ by the rules
$$
\psi_n(\sigma_i) = S_{i, i+1},~~\psi_n(\rho_i) = P_{i, i+1},~~i = 1, 2, \ldots, n-1.
$$
As we know, $\psi_n$ define  representations of $B_n$ and $S_n$. We have to check that the mixed relations of $VB_n$ hold after applying $\psi_n$. The relation
$$
\sigma_i \rho_j = \rho_j \sigma_i,~~ |i-j| \geq 2 ,
$$
goes to the relation 
$$
S_{i,i+1} P_{j,j+1} = P_{j,j+1} S_{i,i+1},~~ |i-j| \geq 2,
$$
which evidently holds in $\Sym(X^n)$.
The second mixed relation
$$
\rho_i \rho_{i+1} \sigma_i = \sigma_{i+1} \rho_i \rho_{i+1},   ~~ i=1, 2, \ldots, {n-2}.
$$
goes to the relation
$$
P_{i,i+1} P_{i+1,i+2} S_{i,i+1} = S_{i+1,i+2} P_{i,i+1} P_{i+1,i+2},   ~~ i=1, 2, \ldots, {n-2},
$$
which is equivalent to 
$$
S_{i,i+1}^{ P_{i+1,i+2}} = S_{i+1,i+2}^{P_{i,i+1}}.
$$
Using the conjugation rules we get $S_{i, i+2} = S_{i, i+2}$.

As we know $VB_n = H_n \rtimes S_n$ and the restriction of $\psi_n$ to $H_n$ gives the representation $\varphi_n^S : H_n \to \Sym(X^n)$.

Let us recall that for each Yang--Baxter set $(X, R)$ with invertible $R$ we can construct a braided set $(X, S)$ where $S = P R$ is also invertible.
\end{proof}

\begin{remark}
For each invertible linear solution $(V, S)$ of the braid equation or invertible linear solution $(V, R)$ of the YBE we can construct a representation $VB_n \to \Aut(V^{\otimes n})$.
\end{remark}

\section{Extension of quasi-triangular Hopf algebras}
\label{sec-Ext}
Recall that a {\it quasi\-trian\-gular Hopf algebra} is a Hopf algebra $A$ together with an
invertible element $R \in A \otimes A$ (the quantum $R$-matrix) satisfying the `exchange'
condition
$$
\Delta^{op}(a) = R \Delta(a) R^{-1},~~a \in A,
$$
and the compatibility condition
$$
(\Delta\otimes \id)R=R_{23} R_{13},~~(\id\otimes \Delta)R=R_{12} R_{13}.
$$
These conditions imply that $R$ satisfies the quantum Yang--Baxter equation
$$
R_{12} R_{13} R_{23} = R_{23}  R_{13} R_{12}.
$$

The `classical analogue' of a quasitriangular Hopf algebra is realized  in the context of Lie algebras. A {\it Lie bialgebra} is a dual pair $(\mathfrak{g}, \mathfrak{g}^*)$ of Lie
algebras for which the dual of commutator on $\mathfrak{g}^*$, $c : \mathfrak{g} \to \mathfrak{g} \land \mathfrak{g}$, is a cocycle with respect to the adjoint representation. The Lie bialgebra is called {\it quasitriangular} if
it is equipped with an element $r \in  \mathfrak{g} \otimes \mathfrak{g}$ (the classical $r$-matrix) satisfying conditions which are `infinitesimal versions' of those for $R$, 
the most important of them
being the classical Yang--Baxter equation
$$
[r_{12}, r_{13}] + [r_{12}, r_{23}] + [r_{13}, r_{13}] = 0. 
$$
We consider
$\mathfrak{g}$ as embedded in $U(\mathcal{\mathfrak{g}})$.

\subsection{Product of Hopf algebras}

Recall some definitions and constructions which one can find in  \cite[Chapter 4]{CP}.

A Hopf algebra $A$ over a commutative ring $k$ is said to be {\it almost cocommutative} if there is an invertible element $R\in A\otimes A$ such that
$$
\Delta^{op}(a) = R  \Delta(a) R^{-1}
$$
for all $a \in A$.

Recall a construction of the product of Hopf algebras. Let $B(\Delta^B, \varepsilon^B, S^B, R^B)$  and $C(\Delta^C, \varepsilon^C, S^C, R^C)$ be Hopf algebras over a commutative ring $K$, 
and let $R\in C\otimes B$ be an invertible element such that
\bea
\label{spl}
(\Delta^C\otimes id)R=R_{23} R_{13},&\qquad& (id\otimes \Delta^B)R=R_{12} R_{13},\\
(id\otimes S^B)R=R^{-1},&\qquad&(S^C\otimes id)R=R^{-1}.\nn
\eea
Then the tensor product $B\otimes C$ can be endowed  with the  Hopf algebra structure with the multiplication of the tensor product, a comultiplication given by
\bea
\Delta(b\otimes c)=R_{23}\Delta_{13}^B(b) \Delta_{24}^C(c) R_{23}^{-1};\nn
\eea
an antipode
\bea
S(b\otimes c)=R_{21}^{-1} (S^B(b)\otimes S^C(c))R_{21}\nn
\eea
and a counit
\bea
\varepsilon(b\otimes c)=\varepsilon^B(b) \varepsilon^C(c).\nn
\eea
This algebra is denoted by $B \underset{R}{\otimes} C$.

\begin{theorem}
\label{th-1}
Let under the conditions of the previous construction the Hopf algebras $B,C$ be quasi-triangular, i.e. such that the comultiplication is quasi-cocommutative
\bea
(\Delta^B)^{op}&=&R^B \Delta^B (R^B)^{-1};\nn\\
(\Delta^C)^{op}&=&R^C \Delta^C (R^C)^{-1};\nn
\eea
and moreover the following conditions fulfill
\bea
(\Delta^B\otimes id)R^B=R_{13}^B R_{23}^B;&\qquad& (id \otimes \Delta^B)R^B=R_{13}^B R_{12}^B;\nn\\
(\Delta^C\otimes id)R^C=R_{13}^C R_{23}^C;&\qquad& (id \otimes \Delta^C)R^C=R_{13}^C R_{12}^C.\nn
\eea
In such a case the Hopf algebra $B\otimes C$ is also a quasi-triangular with the structure element (the quantum $R$-matrix)
\bea
\mathcal{R}=R_{41}R_{13}^B R_{24}^C R_{23}^{-1}.\nn
\eea
\end{theorem}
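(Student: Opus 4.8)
The plan is to check directly the three defining conditions of a quasi-triangular Hopf algebra for the pair $\bigl(B\underset{R}{\otimes}C,\ \mathcal{R}\bigr)$, $\mathcal{R}=R_{41}R^B_{13}R^C_{24}R_{23}^{-1}$ (this element is invertible because $R$, $R^B$, $R^C$ are): quasi-cocommutativity $\Delta^{op}(h)=\mathcal{R}\,\Delta(h)\,\mathcal{R}^{-1}$, and the two hexagon identities $(\Delta\otimes\id)\mathcal{R}=\mathcal{R}_{13}\mathcal{R}_{23}$ and $(\id\otimes\Delta)\mathcal{R}=\mathcal{R}_{13}\mathcal{R}_{12}$, in the convention of the hypotheses. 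I label the four legs of $(B\otimes C)^{\otimes2}$ by $1,2,3,4$, with $1,3$ of type $B$ and $2,4$ of type $C$, so that $R^B$ sits on legs $1,3$, $R^C$ on legs $2,4$, the mixed $R\in C\otimes B$ on legs $2,3$ (and $R_{41}$ is its image under the flip of the two factors $B\otimes C$), and I use six legs for $(B\underset{R}{\otimes}C)^{\otimes3}$. The data I may feed in are: the comultiplication $\Delta(b\otimes c)=R_{23}\Delta^B_{13}(b)\Delta^C_{24}(c)R_{23}^{-1}$ and the antipode $S(b\otimes c)=R_{21}^{-1}(S^B(b)\otimes S^C(c))R_{21}$ from the construction above; the quasi-cocommutativity and hexagon identities for $R^B$ and $R^C$; the relations (\ref{spl}); and the elementary fact that two factors of a tensor product of algebras supported on disjoint sets of legs commute.

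For quasi-cocommutativity, observe that $\Delta$, $\Delta^{op}$ and $h\mapsto\mathcal{R}\,\Delta(h)\,\mathcal{R}^{-1}$ are all $K$-algebra homomorphisms $B\underset{R}{\otimes}C\to(B\underset{R}{\otimes}C)^{\otimes2}$, so it is enough to verify the identity on the algebra generators $b\otimes1$ and $1\otimes c$. On $b\otimes1$ we have $\Delta(b\otimes1)=R_{23}\Delta^B_{13}(b)R_{23}^{-1}$; applying the flip of the two copies gives $\Delta^{op}(b\otimes1)=R_{41}(\Delta^B)^{op}_{13}(b)R_{41}^{-1}$, and quasi-cocommutativity of $B$ rewrites this as $R_{41}R^B_{13}\,\Delta^B_{13}(b)\,(R^B_{13})^{-1}R_{41}^{-1}$. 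On the other hand, the inner factors $R_{23}^{\pm1}$ cancel in $\mathcal{R}\,\Delta(b\otimes1)\,\mathcal{R}^{-1}$, leaving $R_{41}R^B_{13}R^C_{24}\,\Delta^B_{13}(b)\,(R^C_{24})^{-1}(R^B_{13})^{-1}R_{41}^{-1}$, and the two expressions coincide because $R^C_{24}$ (legs $2,4$) commutes with $\Delta^B_{13}(b)$ (legs $1,3$). The case $1\otimes c$ is symmetric, using quasi-cocommutativity of $C$ and the commutation of $R^B_{13}$ with everything supported on legs $2,4$. Note this step uses only the quasi-cocommutativity twists of $B$ and $C$, not the hexagon data.

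The two hexagon identities for $\mathcal{R}$ are the computational heart. Here I would expand both sides in $(B\underset{R}{\otimes}C)^{\otimes3}$: applying the comultiplication formula to the appropriate copy of $B\otimes C$ inside $\mathcal{R}$, the left-hand sides become words in (leg-relabelings of) $R_{23}^{\pm1}$, $(\Delta^B\otimes\id)R^B=R^B_{13}R^B_{23}$, $(\id\otimes\Delta^C)R^C=R^C_{13}R^C_{12}$, and the mixed relations $(\Delta^C\otimes\id)R=R_{23}R_{13}$, $(\id\otimes\Delta^B)R=R_{12}R_{13}$; the right-hand sides $\mathcal{R}_{13}\mathcal{R}_{23}$ and $\mathcal{R}_{13}\mathcal{R}_{12}$ expand into the same alphabet, and one reconciles the two by repeatedly commuting disjoint-leg factors past one another and contracting with (\ref{spl}). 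The sole obstacle is the six-leg bookkeeping. A cleaner, essentially equivalent route is to recognize $B\underset{R}{\otimes}C$ as the Drinfeld twist of the plain tensor-product Hopf algebra $B\otimes C$ by the twisting element $F:=R_{23}\in(B\otimes C)^{\otimes2}$: the hexagon relations in (\ref{spl}) are exactly the $2$-cocycle condition for $F$ (and force counitality); $B\otimes C$ is quasi-triangular with universal $R$-matrix $R^B_{13}R^C_{24}$ since $B$ and $C$ are; and the standard twisting theorem then gives quasi-triangularity of $B\underset{R}{\otimes}C$ with universal $R$-matrix $F_{21}\bigl(R^B_{13}R^C_{24}\bigr)F^{-1}=R_{41}R^B_{13}R^C_{24}R_{23}^{-1}=\mathcal{R}$. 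Finally, I would remark that the antipode relations $(\id\otimes S^B)R=R^{-1}$ and $(S^C\otimes\id)R=R^{-1}$ in (\ref{spl}) are used in the cited construction of $B\underset{R}{\otimes}C$ as a Hopf algebra and are not needed again for quasi-triangularity.
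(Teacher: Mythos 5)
Your argument is correct, and it splits into a part that mirrors the paper and a part that genuinely departs from it. The quasi-cocommutativity step is essentially the paper's: the authors likewise write $\Delta^{op}(b\otimes c)=P_{13}P_{24}\Delta(b\otimes c)=R_{41}(\Delta^{B}_{13})^{op}(b)(\Delta^{C}_{24})^{op}(c)R_{41}^{-1}$, apply the twists $R^B,R^C$, and reassemble $\mathcal{R}$ by inserting $R_{23}^{-1}(\,\cdot\,)R_{23}$ and using disjoint-leg commutativity; your reduction to the algebra generators $b\otimes 1$ and $1\otimes c$ is only a cosmetic variant. Where you diverge is the hexagon identities, which are the bulk of the paper's proof: there, $(\Delta\otimes\id)\mathcal{R}=\mathcal{R}_{13}\mathcal{R}_{23}$ is verified by exactly the six-leg computation you only sketch --- the authors first derive the intertwining relations $R^{C}_{12}R_{23}R_{13}=R_{13}R_{23}R^{C}_{12}$ and $R^{B}_{23}R_{12}R_{13}=R_{13}R_{12}R^{B}_{23}$ from (\ref{spl}), expand both sides into words in $R,R^B,R^C$ on six legs, and match them factor by factor (the second hexagon is left as ``similarly''). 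Your Drinfeld-twist route is a legitimate complete alternative: $F=R_{23}$ is a counital $2$-cocycle on the plain tensor product $B\otimes C$, since by (\ref{spl}) the two sides of the cocycle identity are $R_{23}\cdot R_{45}R_{25}$ and $R_{45}\cdot R_{23}R_{25}$, equal by disjoint-leg commutativity; $B\otimes C$ is quasi-triangular with $R^{B}_{13}R^{C}_{24}$; and the standard twisting theorem yields $\mathcal{R}=F_{21}\bigl(R^{B}_{13}R^{C}_{24}\bigr)F^{-1}$. Notably, the paper records precisely this interpretation (with $\Phi=R_{23}R_{25}$, $\Psi=R_{45}R_{25}$) in the subsection immediately following the theorem, but only a posteriori and in the braid-form normalization, not as the proof mechanism. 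The paper's computation buys self-containedness; yours buys brevity, outsources the bookkeeping to a citable classical result, and explains conceptually where the formula for $\mathcal{R}$ comes from.
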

A part of this theorem could be found in \cite{RS}.

\begin{proof}
Let us represent the opposite comultiplication on $B\otimes C$ in the form:
\bea
\Delta^{op}(b\otimes c)&=&P_{13} P_{24} \Delta(b\otimes c)=R_{41} (\Delta_{13}^{B})^{op}(b) (\Delta_{24}^{C})^{op}(c) R_{41}^{-1}\nn\\
&=&R_{41}R_{13}^B \Delta_{13}^B(b) (R_{13}^B)^{-1} R_{24}^C \Delta_{24}^C(c) (R_{24}^C)^{-1} R_{41}^{-1}\nn\\
&=&R_{41}R_{13}^B R_{24}^C R_{23}^{-1} \Delta(b\otimes c)  R_{23}(R_{13}^B)^{-1} (R_{24}^C)^{-1} R_{41}^{-1}.\nn
\eea

Thus:
\bea
\Delta^{op}(b\otimes c)=\mathcal{R}\Delta(b\otimes c) \mathcal{R}^{-1}.\nn
\eea
We now prove that this Hopf algebra is quasi-triangular.
We derive several consequences of the conditions (\ref{spl}) on $R$
\bea
\label{eq2:comp1}
R_{12}^C R_{23} R_{13} &=&R_{13} R_{23} R_{12}^C,\\
\label{eq2:comp2}
R_{23}^B R_{12} R_{13}&=&R_{13} R_{12} R_{23}^B.
\eea
We will use multi-indexes to denote internal tensor components. For example, $\mathcal{R}=R_{41}R_{13}^B R_{24}^C R_{23}^{-1}$ is indexed as follows:
\bea
\mathcal{R}_{(12)(34)}.\nn
\eea
This underlines that this is an element of the space $(B\otimes C)\otimes (B\otimes C).$
Let us prove that:
\bea
(\Delta_{(12)}\otimes id)\mathcal{R}=\mathcal{R}_{(12)(56)} \mathcal{R}_{(34)(56)}.\nn
\eea
The R.H.S. of this expression takes the form:
\bea
\label{eq1:rhs}
R_{61} R_{15}^B R_{26}^C R_{25}^{-1} R_{63} R_{35}^B R_{46}^C R_{45}^{-1}.
\eea
We will calculate the left part sequentially. First we will apply the operation$$P_{23}(\Delta^B\otimes \Delta^C\otimes id \otimes id)$$ to $\mathcal{R}_{(12)(34)}.$ We obtain:
\bea
R_{61} R_{63} R_{15}^B R_{35}^B R_{26}^C R_{46}^C R_{25}^{-1} R_{45}^{-1}.
\eea
Then we conjugate an expression with $R_{23}$.
\bea
R_{23}R_{61} R_{63} R_{15}^B R_{35}^B R_{26}^C R_{46}^C R_{25}^{-1} R_{45}^{-1} R_{23}^{-1}&=&
R_{61}{\color{blue} R_{23} R_{63}  R_{26}^C} {\color{magenta}R_{15}^B} R_{35}^B  {\color{magenta} R_{46}^C} R_{25}^{-1} R_{45}^{-1} R_{23}^{-1}
\nn\\
R_{61} {\color{magenta}R_{15}^B}  {\color{blue} R_{26}^C R_{63}  R_{23}}  R_{35}^B   R_{25}^{-1} {\color{magenta} R_{46}^C} R_{45}^{-1} R_{23}^{-1}&=&R_{61} R_{15}^B   R_{26}^C R_{63} {\color{blue} R_{23}  R_{35}^B   R_{25}^{-1}}  R_{46}^C R_{45}^{-1} R_{23}^{-1}\nn\\
R_{61} R_{15}^B   R_{26}^C R_{63} {\color{blue} R_{23}  R_{35}^B   R_{25}^{-1}}  R_{46}^C R_{45}^{-1} {\color{magenta} R_{23}^{-1}}&=&R_{61} R_{15}^B   R_{26}^C R_{63} {\color{blue} R_{25}^{-1}   R_{35}^B   R_{23}}  {\color{magenta} R_{23}^{-1}} R_{46}^C R_{45}^{-1} .\nn
\eea
This coincides with (\ref{eq1:rhs}). Here we applied (\ref{eq2:comp1}) and (\ref{eq2:comp2}), the corresponding multipliers were highlighted in blue. In purple we highlighted multipliers that could be freely rearranged. Similarly we prove the identity:
\bea
(id \otimes \Delta_{(34)})\mathcal{R}=\mathcal{R}_{(12)(56)} \mathcal{R}_{(12)(34)}.\nn
\eea
\end{proof}

\subsection{Drinfeld twist}
Let $T\in GL(V\otimes V)$ satisfies the braid relation
\bea
T_{12} T_{23} T_{12}=T_{23} T_{12} T_{23}.\nn
\eea

Let also consider $F\in GL(V\otimes V)$ and $\Psi$ and $\Phi$ in $GL(V\otimes V\otimes V)$ such that
\bea
F_{12} \Psi&=&F_{23} \Phi\nn\\
\Phi T_{23}&=&T_{23} \Phi\nn\\
\Psi T_{12}&=&T_{12} \Psi \nn
\eea
then $R=F T F^{-1}$ also satisfies the braid relation
\bea
R_{12} R_{23} R_{12}=R_{23} R_{12} R_{23}.\nn
\eea
Such a transformation is called Drinfeld twist  \cite{DrTw}. This construction was originally proposed in the tensor category in the context of quasi-triangular Hopf algebras deformations. In fact the construction of Theorem \ref{th-1} can be considered as an version of the Drinfeld twist. Let us firstly shift to the braid notation with the help of transpositions $P^B$ and $P^C$ in $B\otimes B$ and $C\otimes C$ respectively:
\bea
R^B&=&P^B T^B;\nn\\
R^C&=&P^C T^C;\nn\\
\mathcal{R}_{(12)(34)}&=&P^B_{13}P^C_{24}\mathcal{T}_{(12)(34)}.\nn
\eea
Then
\bea
\mathcal{T}_{(12)(34)}=R_{23}T^B_{13} T^C_{24} R_{23}^{-1}.\nn
\eea
This expression is a conjugation of the trivial solution $T^B_{13} T^C_{24} $ for the braid relation on the tensor product $B\otimes C.$ 
Let us compare this transformation with the Drinfeld twist.  In terms of  $T^B$ and $T^C$ the equations (\ref{eq2:comp1}) and (\ref{eq2:comp2}) take the form:
\bea
T_{12}^C R_{23} R_{13}&=&R_{23} R_{13} T_{12}^C;\label{eq4:1}\\
T_{23}^B R_{12} R_{13}&=&R_{12} R_{13} T_{23}^B.\label{eq4:2}
\eea
To get exactly the Drinfeld twist we have to find such elements $\Psi$ and $\Phi$ in $(B\otimes C)^{\otimes ^3}$ that
\bea
F_{(12)(34)} \Psi_{(12)(34)(56)}&=&F_{(34)(56)}\Phi_{(12)(34)(56)};\label{eq3:1}\\
\Phi_{(12)(34)(56)} T_{35}^B T_{46}^C&=&T_{35}^B T_{46}^C \Phi_{(12)(34)(56)};\label{eq3:2}\\
\Psi_{(12)(34)(56)} T_{13}^B T_{24}^C&=&T_{13}^B T_{24}^C \Psi_{(12)(34)(56)}.\label{eq3:3}
\eea
Due to (\ref{eq4:1}) and (\ref{eq4:2}) the following choice
\bea
\Phi&=&R_{23} R_{25};\nn\\
\Psi&=&R_{45} R_{25};\nn
\eea
guarantees equation (\ref{eq3:2}), (\ref{eq3:3}) and (\ref{eq3:1}). This choice is argued by \cite{Kul}.

\subsection{Infinitesimal version in tensor case}
Here we deduce a classical limit of Theorem~\ref{th-1}.
\begin{theorem} \label{tc}
Let $r^B$ and $r^C$ are solutions for the classical Yang-Baxter equation (CYBE) on Lie algebras $B$ and $C$ respectively and $r$ satisfies equations:
\bea
\label{fused2}
[r_{12}^C,r_{13}+r_{23}]=[r_{23},r_{13}];
\eea
\bea
\label{fused}
[r_{12}^B,r_{12}+r_{13}]=[r_{12},r_{13}].
\eea
Then
\bea
\widetilde{r}_{(12)(34)}=r^B_{13}+r^C_{24}+r_{41}-r_{23}\nn
\eea
solves the CYBE on $B\otimes C.$
\end{theorem}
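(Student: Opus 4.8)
The plan is to treat Theorem~\ref{tc} as the classical limit of Theorem~\ref{th-1} and to prove it by the Lie-algebraic computation obtained by ``differentiating'' the proof of that theorem at the identity. Heuristically, writing $R^B = 1 + \hbar r^B + o(\hbar)$, $R^C = 1 + \hbar r^C + o(\hbar)$ and $R = 1 + \hbar r + o(\hbar)$, the quantum $R$-matrix $\mathcal{R} = R_{41} R^B_{13} R^C_{24} R_{23}^{-1}$ of Theorem~\ref{th-1} expands as $1 + \hbar\,\widetilde{r}_{(12)(34)} + o(\hbar)$ with $\widetilde{r}_{(12)(34)} = r^B_{13} + r^C_{24} + r_{41} - r_{23}$; the consequences (\ref{eq2:comp1}) and (\ref{eq2:comp2}) of the compatibility conditions on $R$ degenerate at order $\hbar^2$ to (\ref{fused2}) and (\ref{fused}) (up to the orientation conventions for the classical $r$-matrix); and the quantum Yang--Baxter equation $\mathcal{R}_{12}\mathcal{R}_{13}\mathcal{R}_{23} = \mathcal{R}_{23}\mathcal{R}_{13}\mathcal{R}_{12}$ degenerates to the CYBE for $\widetilde{r}$ on the Lie algebra $B \oplus C$ underlying $B \otimes C$. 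Rather than invoke an existence theorem for quantizations to make this precise, I would carry out the corresponding computation directly in $(B \oplus C)^{\otimes 3}$.

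Concretely: substitute $\widetilde{r} = r^B_{13} + r^C_{24} + r_{41} - r_{23}$ into $[\widetilde{r}_{12},\widetilde{r}_{13}] + [\widetilde{r}_{12},\widetilde{r}_{23}] + [\widetilde{r}_{13},\widetilde{r}_{23}] = 0$, where the three copies of $B \oplus C$ occupy the slot pairs $(1,2),(3,4),(5,6)$ with the odd slots carrying $B$ and the even slots carrying $C$, and expand every bracket. Each commutator whose two factors act on disjoint slots drops out, and the survivors organize, according to which of $B$ or $C$ the occupied slots belong to, into four families. The terms supported on $\{1,3,5\}$ add up to $[r^B_{13},r^B_{15}] + [r^B_{13},r^B_{35}] + [r^B_{15},r^B_{35}]$ and vanish by the CYBE for $r^B$; symmetrically the terms supported on $\{2,4,6\}$ vanish by the CYBE for $r^C$. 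The ``$B$-$r$'' mixed terms regroup, after collecting by the two $B$-slots $a,b$ and the one $C$-slot $c$ they involve, into three expressions of the shape $[r^B_{ab},\,r_{ca} + r_{cb}]$, each of which by (\ref{fused}) equals a pure-$r$ commutator of $r_{ca}$ and $r_{cb}$; symmetrically the ``$C$-$r$'' mixed terms become three expressions $[r^C_{ab},\,r_{ca} + r_{cb}]$ handled by (\ref{fused2}). Finally, the six pure-$r$ commutators produced in these two steps are exactly the six pure-$r$ terms already present in the expansion of the CYBE for $\widetilde{r}$, with opposite sign, so they cancel; equivalently, the ``pure-$r$'' part of the whole identity is the CYBE for $\rho := r_{41} - r_{23}$, and $\rho$ fails the CYBE by precisely the amount that (\ref{fused}) and (\ref{fused2}) feed back. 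Assembling the four families, the left-hand side is identically $0$.

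The only genuine work is the bookkeeping in the two mixed families: identifying the correct instance of (\ref{fused}) or of (\ref{fused2}) for each mixed commutator --- three instances of each, obtained from the abstract relation by relabelling $\{1,2,3\}$ as the relevant slot triple --- and checking that the pure-$r$ commutators these produce come out with exactly the signs needed to kill the ones already there (so that, in particular, $r$ need not itself satisfy the CYBE). This is the classical reflection of the coloured rearrangements (the blue factors, where (\ref{eq2:comp1})--(\ref{eq2:comp2}) are used, and the purple ones, freely commuted) in the proof of Theorem~\ref{th-1}, so no new idea is required; the one point that needs care is the $B \leftrightarrow C$ asymmetry together with the orientation of $r \in C \otimes B$ when it is placed on a $(C\text{-slot},\,B\text{-slot})$ pair. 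I expect this sign-and-slot bookkeeping to be the only real obstacle to a clean write-up.
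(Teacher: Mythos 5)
Your proposal is correct and follows essentially the same route as the paper: expand the CYBE for $\widetilde{r}$ directly in the six tensor slots, discard commutators on disjoint slots, let the pure-$B$ and pure-$C$ families vanish by the CYBE for $r^B$ and $r^C$, and absorb each mixed family together with its accompanying pure-$r$ commutator via an instance of (\ref{fused}) or (\ref{fused2}). The only cosmetic difference is that the paper groups each mixed bracket $[r^B_{ab},r_{ca}+r_{cb}]$ with the pure-$r$ term $[r_{ca},r_{cb}]$ into a single vanishing block, whereas you convert the mixed terms and then cancel against the pure-$r$ part; this is the same bookkeeping.
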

\begin{proof}
The CYBE in this case takes the form:
\bea
[\widetilde{r}_{(12)(34)},\widetilde{r}_{(12)(56)}+\widetilde{r}_{(34)(56)}]+[\widetilde{r}_{(12)(56)},\widetilde{r}_{(34)(56)}]=0\nn
\eea
This could be expressed as follows:
\bea
&&[r_{13}^B+r_{24}^C+r_{41}-r_{23},r_{15}^B+r_{26}^c+r_{61}-r_{25}+r_{35}^B+r_{46}^C+r_{63}-r_{45}]\nn\\
&+&[r_{15}^B+r_{26}^c+r_{61}-r_{25},r_{35}^B+r_{46}^C+r_{63}-r_{45}].\nn
\eea
All commutators of $r_{ij}^B$ with $r_{kl}^B$  provide CYBE for $r^B$
\bea
[r_{13}^B,r_{15}^B]+[r_{13}^B,r_{35}^B]+[r_{15}^B,r_{35}^B]=0.\nn
\eea
The same thing we see for the commutators of $r^C$ with $r^C.$
We observe that the commutators between $r^B$ and $r^C$ are all trivial due to their localization in different tensor component. Let us view in details the commutators of $r_{13}^B$ with different $r$'s.
\bea
[r_{13}^B,r_{61}+r_{63}]+[r_{61},r_{63}].\nn
\eea
This is zero due to (\ref{fused}). The similar formulas gather with other terms.
\end{proof}

\begin{remark}
The conditions (\ref{fused2}) and (\ref{fused}) on $r$ which are necessary for extension in infinitesimal case can be expressed as Maurer-Cartan condition in an appropriate differential graded Lie algebra. We suppose that it is a relevant version of the cohomological characterization of extensions in this case. We postpone the presentation of this material for the next publications.
\end{remark}

\bigskip

\section{Extensions in set-theoretic case}
\label{sec-Ext2}

\subsection{Product of Yang-Baxter sets}
In this section we are considering the next

\begin{question}
Let $B$ and $C$ are two non-empty sets and $R^B : B \times B \to B \times B$ and $R^C  : C \times C \to C \times C$ be Yang--Baxter maps (YBM) on them. It means that these maps satisfy the equalities
$$
R^{B}_{12} R^{B}_{13} R^{B}_{23} = R^{B}_{23} R^{B}_{13} R^{B}_{12},~~~R^{C}_{12} R^{C}_{13} R^{C}_{23} = R^{C}_{23} R^{C}_{13} R^{C}_{12}.
$$
What YBMs is it possible to define on the direct product $B \times C$?
\end{question}

There is an obvious way to take the direct product $R^B \times R^C$, which acts by the rule
$$
\left(R^B \times R^C \right) ((b_1, c_1), (b_2, c_2)) = \left( (\sigma_{b_2}^B(b_1), \sigma_{c_2}^C(c_1)), (\tau_{b_1}^B(b_2), \tau_{c_1}^C(c_2)) \right),
$$
where
$$
R^B(b_1, b_2) = \left( \sigma_{b_2}^B(b_1),  \tau_{b_1}^B(b_2) \right),~~~R^C(c_1, c_2) = \left( \sigma_{c_2}^C(c_1),  \tau_{c_1}^C(c_2) \right).
$$

\begin{remark}
If $B$ and $C$ are not only sets but some algebraic systems: groups, racks, bi-racks, skew braces, we can use extensions of these systems. The group case is emphasized in Subsection \ref{groupext}. Typically in these constructions the roles of the sets $B$ and $C$ are non-symmetric, one of them is the image and the second one is the kernel of a homomorphism. 
\end{remark}

Let us define a map $R :  C \times B \to C \times B$, $R(c, b) = (\mu(c, b), \nu(c, b))$ such that
$$
R^{B}_{23} R_{12} R_{13} = R_{13} R_{12} R^{B}_{23},~~~R^{C}_{12} R_{23} R_{13} = R_{13} R_{23} R^{C}_{12}.
$$
The both sides of the first equality are maps on $C \times B \times B \to C \times B \times B$, for the second one  -  on $C \times C \times B \to C \times C \times B$.

The next lemma is evident

\begin{lemma} \label{rel}
The next relations hold
\begin{align}
R_{15}^B R_{41} R_{45} = R_{45}  R_{41} R_{15}^B, \label{rb1}\\
 R_{45}^{-1} R_{15}^B R_{41} =  R_{41} R_{15}^B  R_{45}^{-1}, \label{rb2}\\
 R_{35}^B R_{23} R_{25} = R_{25}  R_{23} R_{35}^B, \label{rb3}\\
 R_{23}^{-1} R_{25}^{-1} R_{35}^B =  R_{35}^B R_{25}^{-1}  R_{23}^{-1}, \label{rb4}\\
R^B_{13}  R_{61} R_{63} =   R_{63} R_{61} R^B_{13}, \label{rb5}\\
R_{26}^C R_{63} R_{23} = R_{23} R_{63} R_{26}^C, \label{rc1}\\
 R_{23}^{-1} R_{26}^C R_{63} =  R_{63} R_{26}^C  R_{23}^{-1}, \label{rc2}\\
R_{24}^C R_{45} R_{25} = R_{25} R_{45} R_{24}^C, \label{rc3}\\
 R_{45}^{-1} R_{25}^{-1} R_{24}^C =  R_{24}^C R_{25}^{-1}  R_{45}^{-1}, \label{rc4}\\
 R_{46}^C R_{61} R_{41} = R_{41} R_{61} R_{46}^C \label{rc5}.
\end{align}
\end{lemma}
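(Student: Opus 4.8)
The plan is to obtain every one of the ten identities from just the two defining relations of the mixed map $R$, namely $R^{B}_{23} R_{12} R_{13} = R_{13} R_{12} R^{B}_{23}$ on $C\times B\times B$ and $R^{C}_{12} R_{23} R_{13} = R_{13} R_{23} R^{C}_{12}$ on $C\times C\times B$, by combining a relabelling of tensor slots with elementary manipulations of invertible operators (we assume, as the notation $R^{-1}$ in the statement presupposes, that $R$ is invertible).

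First I would fix the ambient space. In all ten identities the odd indices $1,3,5$ label copies of $B$ and the even indices $2,4,6$ label copies of $C$, so everything takes place in $(B\times C)^{\times 3}$, the $i$-th copy of $B\times C$ occupying slots $(2i-1,2i)$; in particular the mixed operators that occur are $R_{\alpha\beta}$ with $\alpha\in\{2,4,6\}$ the $C$-input slot and $\beta\in\{1,3,5\}$ the $B$-input slot. Each identity involves only three of the six slots --- two of one type, carrying an $R^{B}$ or an $R^{C}$, and one spectator slot of the opposite type --- and an operator identity that holds on a sub-product of factors holds on the full product after tensoring with the identity on the complementary factors. Hence it is enough to check each identity on the three slots it actually uses, and re-indexing those three slots so that they appear, in order, as $(C,B,B)$ (respectively $(C,C,B)$) turns the relevant operators into precisely $R^{B}_{23},R_{12},R_{13}$ (respectively $R^{C}_{12},R_{23},R_{13}$), up to identity factors.

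With this dictionary the verification is immediate. After relabelling, $(\ref{rb1})$, $(\ref{rb3})$ and $(\ref{rb5})$ are literally the first defining relation --- identifying the slot triples $(4,1,5)$, $(2,3,5)$ and $(6,1,3)$ respectively with $(1,2,3)$ --- and $(\ref{rc1})$, $(\ref{rc3})$, $(\ref{rc5})$ are literally the second one, identifying $(2,6,3)$, $(2,4,5)$ and $(4,6,1)$ with $(1,2,3)$. The remaining four are purely algebraic: writing $a=R^{B}_{23}$, $b=R_{12}$, $c=R_{13}$, the first defining relation says $abc=cba$, and multiplying on both sides first by $c^{-1}$ and then by $b^{-1}$ gives
\[
abc=cba\ \Rightarrow\ c^{-1}ab=bac^{-1}\ \Rightarrow\ b^{-1}c^{-1}a=ac^{-1}b^{-1};
\]
the last two equalities are exactly $(\ref{rb2})$ and $(\ref{rb4})$ after relabelling, and $(\ref{rc2})$, $(\ref{rc4})$ are the same two rearrangements of $abc=cba$ with now $a=R^{C}_{12}$, $b=R_{23}$, $c=R_{13}$.

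In short, there is no real obstacle: the only care required is in the bookkeeping --- keeping track of which physical slot is the spectator and respecting the convention on which slot of a mixed operator $R_{\alpha\beta}$ is the $C$-input --- after which each identity collapses either to one of the two defining relations of $R$ or to a one-line computation with inverses. This is why the lemma is recorded as evident; still, writing the dictionary out explicitly (as above) is worthwhile, since these precise forms are what the subsequent extension argument on $B\times C$ will invoke.
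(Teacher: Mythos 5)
Your proof is correct and is exactly the verification the paper leaves implicit (the lemma is stated as ``evident'' with no written proof): each identity is the defining relation $R^{B}_{23}R_{12}R_{13}=R_{13}R_{12}R^{B}_{23}$ or $R^{C}_{12}R_{23}R_{13}=R_{13}R_{23}R^{C}_{12}$ after relabelling the three active slots, or a one-line rearrangement of it using invertibility of $R$. Your slot dictionary and the two algebraic rearrangements $abc=cba\Rightarrow c^{-1}ab=bac^{-1}\Rightarrow b^{-1}c^{-1}a=ac^{-1}b^{-1}$ all check out.
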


The next theorem is a set-theoretic analogous of Theorem \ref{th-1}.

\begin{theorem} \label{t}
The pair $(B \times C, \mathcal{R})$ where $\mathcal{R} = R_{41} R^B_{13} R^C_{24} R^{-1}_{23}$ is a  YB set.
\end{theorem}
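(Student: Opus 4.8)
The goal is to verify that the induced map satisfies the Yang--Baxter equation
$$
\mathcal{R}_{12}\,\mathcal{R}_{13}\,\mathcal{R}_{23}=\mathcal{R}_{23}\,\mathcal{R}_{13}\,\mathcal{R}_{12}
$$
as self-maps of $(B\times C)^{\times 3}$. I would start by fixing coordinates exactly as in the proof of Theorem~\ref{th-1}: label the three copies of $B\times C$ by six ``atomic'' factors $B_1,C_1,B_2,C_2,B_3,C_3$, with the $B$'s in slots $1,3,5$ and the $C$'s in slots $2,4,6$. With this convention
$$
\mathcal{R}_{12}=R_{41}R_{13}^B R_{24}^C R_{23}^{-1},\qquad
\mathcal{R}_{13}=R_{61}R_{15}^B R_{26}^C R_{25}^{-1},\qquad
\mathcal{R}_{23}=R_{63}R_{35}^B R_{46}^C R_{45}^{-1},
$$
so both sides of the equation become explicit words of length twelve in the operators $R_{ij}^B$, $R_{ij}^C$, $R_{ij}^{\pm1}$, and the task reduces to rewriting one word into the other. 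Note that here the sets $B,C$ carry no Hopf-algebra structure in general, so Theorem~\ref{th-1} does not apply and the verification must be carried out at the set level.

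The rewriting uses only: (i) the Yang--Baxter equations for $R^B$ inside the slot group $\{1,3,5\}$ and for $R^C$ inside $\{2,4,6\}$; (ii) commutativity of operators on disjoint slots --- in particular every $R_{kl}^B$ commutes with every $R_{pq}^C$ and with any operator not meeting $k,l$; and (iii) the relations collected in Lemma~\ref{rel}, which record, in all the index patterns that arise and with both signs, the three-term ``braiding'' relations expressing how a mixing operator $R_{ij}$ slides past the $R^B$-operator (resp.\ $R^C$-operator) sharing its $B$-slot (resp.\ $C$-slot). These relations play here the role that the fused compatibility identities (\ref{eq2:comp1})--(\ref{eq2:comp2}) played in Theorem~\ref{th-1}: since there is no comultiplication to ``unfold'' $R^B$, $R^C$, $R$ onto six slots, Lemma~\ref{rel} supplies the needed six-slot relations directly.

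The rewriting itself I would organize on the model of the displayed chain of equalities in the proof of Theorem~\ref{th-1}. Starting from the left-hand word, I would use (ii) to interleave the $R^B$- and $R^C$-operators into a convenient configuration, transport each mixing factor across them by the appropriate relation of Lemma~\ref{rel}, cancel the $R_{ij}R_{ij}^{-1}$ pairs that appear, and finally apply the Yang--Baxter equations for $R^B$ (in slots $\{1,3,5\}$) and for $R^C$ (in slots $\{2,4,6\}$) to land precisely on the right-hand word (equivalently, one may reduce both sides to a common word). Because the Yang--Baxter equation is a single identity, no further ``lifting'' step in the spirit of Proposition~\ref{virt} is needed once this is done. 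I expect the main obstacle to be bookkeeping rather than anything conceptual: one must schedule the moves so that every step is one of the relations actually at hand --- each reordering involving two mixing operators is performed only jointly with the $R^B$- or $R^C$-operator that completes a relation of Lemma~\ref{rel}, never between two mixing operators alone --- and one must check that the chain terminates at the right-hand side rather than at some other word. Exhibiting such a schedule is precisely the content of the proof, and it is a genuine (if mechanical) computation rather than a formal corollary of Theorem~\ref{th-1}.
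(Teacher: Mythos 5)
Your proposal matches the paper's proof in every essential respect: the same six-slot labelling with the $B$-factors in slots $1,3,5$ and the $C$-factors in slots $2,4,6$, the same expanded words for $\mathcal{R}_{12},\mathcal{R}_{13},\mathcal{R}_{23}$, and the same strategy of sliding the mixing factors $R_{ij}$ across the $R^B$- and $R^C$-operators via Lemma \ref{rel} and disjoint-slot commutativity until both sides share a common left prefix and right suffix, after which the identity reduces to the Yang--Baxter equations for $R^B$ in $\{1,3,5\}$ and $R^C$ in $\{2,4,6\}$. The explicit schedule of moves that you defer as bookkeeping is precisely what the paper's proof writes out, so there is nothing substantive to add.
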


\begin{proof}
We have to check the equality
$$
\mathcal{R}_{12} \mathcal{R}_{13}  \mathcal{R}_{23}  = \mathcal{R}_{23}  \mathcal{R}_{13} \mathcal{R}_{12}.
$$
Take its right side
$$
\mathcal{R}_{23}  \mathcal{R}_{13} \mathcal{R}_{12}  = R_{63} R^B_{35} R^C_{46} (R^{-1}_{45}  \cdot  R_{61}) R^B_{15} (R^C_{26} R^{-1}_{25} 
 \cdot R_{41}) R^B_{13} R^C_{24} R^{-1}_{23}.
$$
Using commutativity relations and Lemma \ref{rel} we will move $R_{41}$ to the left. Since $R_{41}$ commutes with $R^{-1}_{25}$ and $R^C_{26}$ and respectively  $R^{-1}_{45}$ commutes wih $R_{61}$ we obtain
$$
\mathcal{R}_{23}  \mathcal{R}_{13} \mathcal{R}_{12}  = R_{63} R^B_{35} R^C_{46} R_{61}  (R^{-1}_{45} R^B_{15} R_{41})  R^C_{26} R^{-1}_{25} 
 \cdot R^B_{13} R^C_{24} R^{-1}_{23}.
$$
By (\ref{rb2}),
$$
\mathcal{R}_{23}  \mathcal{R}_{13} \mathcal{R}_{12}  = R_{63} R^B_{35} (R^C_{46} R_{61} R_{41}) R^B_{15} R^{-1}_{45}   R^C_{26} R^{-1}_{25} 
 \cdot R^B_{13} R^C_{24} R^{-1}_{23}.
$$
By (\ref{rc5}),
$$
\mathcal{R}_{23}  \mathcal{R}_{13} \mathcal{R}_{12}  = R_{63} R^B_{35} (R^C_{46} R_{61} R_{41}) R^B_{15} R^{-1}_{45}   R^C_{26} R^{-1}_{25} 
 \cdot R^B_{13} R^C_{24} R^{-1}_{23}.
$$
Hence,
$$
\mathcal{R}_{23}  \mathcal{R}_{13} \mathcal{R}_{12}  = R_{63} R_{41} (R^B_{35}  R_{61}) R^C_{46}  R^B_{15} R^{-1}_{45}   R^C_{26} R^{-1}_{25} 
 \cdot R^B_{13} R^C_{24} R^{-1}_{23},
$$
and
$$
\mathcal{R}_{23}  \mathcal{R}_{13} \mathcal{R}_{12}  = (R_{63} R_{41}) R_{61} R^B_{35}  R^C_{46}  R^B_{15} R^{-1}_{45}   R^C_{26} R^{-1}_{25} 
 \cdot R^B_{13} R^C_{24} R^{-1}_{23},
$$
that is equal to
$$
\mathcal{R}_{23}  \mathcal{R}_{13} \mathcal{R}_{12}  = R_{41} R_{63} R_{61} R^B_{35}  R^C_{46}  R^B_{15} R^{-1}_{45}   R^C_{26} R^{-1}_{25} 
 \cdot R^B_{13} R^C_{24} R^{-1}_{23}.
$$

Take the  left hand side
$$
\mathcal{R}_{12} \mathcal{R}_{13}  \mathcal{R}_{23}  =  R_{41} R^B_{13} R^C_{24} (R^{-1}_{23} \cdot  R_{61} R^B_{15}) R^C_{26} (R^{-1}_{25} \cdot  R_{63}) R^B_{35} R^C_{46} R^{-1}_{45}.
$$
Using commutativity relations and Lemma \ref{rel} we will move $R_{63}$ to the left. Since $R^{-1}_{23}$ commutes with $R_{61} R^B_{15}$ and $R_{63}$ commutes 
with $R^{-1}_{25}$ we get
$$
\mathcal{R}_{12} \mathcal{R}_{13}  \mathcal{R}_{23}  =  R_{41} R^B_{13} R^C_{24}  R_{61} R^B_{15} (R^{-1}_{23} R^C_{26} R_{63}) R^{-1}_{25}  R^B_{35} R^C_{46} R^{-1}_{45}.
$$
By (\ref{rc2}), $R_{23}^{-1} R_{26}^C R_{63} =  R_{63} R_{26}^C  R_{23}^{-1}$. Hence
$$
\mathcal{R}_{12} \mathcal{R}_{13}  \mathcal{R}_{23}  =  R_{41} R^B_{13} R^C_{24}  R_{61} (R^B_{15} R_{63}) R_{26}^C  R_{23}^{-1} R^{-1}_{25}  R^B_{35} R^C_{46} R^{-1}_{45}.
$$
By commutativity we get
$$
\mathcal{R}_{12} \mathcal{R}_{13}  \mathcal{R}_{23}  =  R_{41} R^B_{13} (R^C_{24}  R_{61} R_{63}) R^B_{15} R_{26}^C  R_{23}^{-1} R^{-1}_{25}  R^B_{35} R^C_{46} R^{-1}_{45}.
$$
Since $R^C_{24}$ commutes with the product $ R_{61} R_{63}$ we obtain
$$
\mathcal{R}_{12} \mathcal{R}_{13}  \mathcal{R}_{23}  =  R_{41} (R^B_{13}  R_{61} R_{63}) R^C_{24}  R^B_{15} R_{26}^C  R_{23}^{-1} R^{-1}_{25}  R^B_{35} R^C_{46} R^{-1}_{45}.
$$
By (\ref{rb5}),
$$
\mathcal{R}_{12} \mathcal{R}_{13}  \mathcal{R}_{23}  =  R_{41}  R_{63} R_{61} R^B_{13}  R^C_{24}  R^B_{15} R_{26}^C  R_{23}^{-1} R^{-1}_{25}  R^B_{35} R^C_{46} R^{-1}_{45}.
$$
Comparing the right side of the last equality with the corresponding  equality for $\mathcal{R}_{23}  \mathcal{R}_{13} \mathcal{R}_{12}$ we see that we can reduce both sides from the left by $R_{41}  R_{63} R_{61}$. Hence, the equality
$$
\mathcal{R}_{12} \mathcal{R}_{13}  \mathcal{R}_{23} = \mathcal{R}_{23}  \mathcal{R}_{13} \mathcal{R}_{12}
$$  
takes the form
$$
R^B_{13}  R^C_{24}  R^B_{15} R_{26}^C  (R_{23}^{-1} R^{-1}_{25}  R^B_{35}) R^C_{46} R^{-1}_{45} = 
R^B_{35}  R^C_{46}  R^B_{15} (R^{-1}_{45}   R^C_{26}) R^{-1}_{25} R^B_{13} R^C_{24} R^{-1}_{23}.
$$
By (\ref{rb4}) and commutativity we get
$$
R^B_{13}  R^C_{24}  R^B_{15} R_{26}^C  R^B_{35} (R^{-1}_{25} R_{23}^{-1} R^C_{46}) R^{-1}_{45} = 
R^B_{35}  R^C_{46}  R^B_{15}  R^C_{26} (R^{-1}_{45}   R^{-1}_{25} R^B_{13}) R^C_{24} R^{-1}_{23}.
$$
By commutativity we induce
$$
R^B_{13}  R^C_{24}  R^B_{15} R_{26}^C  R^B_{35} R^C_{46} R^{-1}_{25} R_{23}^{-1} R^{-1}_{45} = 
R^B_{35}  R^C_{46}  R^B_{15}  R^C_{26} R^B_{13} (R^{-1}_{45}   R^{-1}_{25}  R^C_{24}) R^{-1}_{23}.
$$
 (\ref{rc4}) gives
$$
R^B_{13}  R^C_{24}  R^B_{15} R_{26}^C  R^B_{35} R^C_{46} R^{-1}_{25} R_{23}^{-1} R^{-1}_{45} = 
R^B_{35}  R^C_{46}  R^B_{15}  R^C_{26} R^B_{13}  R^C_{24} R^{-1}_{25} R^{-1}_{45}   R^{-1}_{23}.
$$
Since $R^{-1}_{45}$ commutes with $  R^{-1}_{23}$, we can reduce both sides of the last equality by $R^{-1}_{25} R_{23}^{-1} R^{-1}_{45}$ from the right,
$$
R^B_{13}  R^C_{24}  R^B_{15} R_{26}^C  R^B_{35} R^C_{46}  = 
R^B_{35}  R^C_{46}  R^B_{15}  R^C_{26} R^B_{13}  R^C_{24}.
$$
The last equality is equivalent to 
$$
R^B_{13}  R^B_{15}  R^B_{35} \cdot  R^C_{24} R_{26}^C  R^C_{46}  = 
R^B_{35} R^B_{15} R^B_{13} \cdot  R^C_{46}   R^C_{26}   R^C_{24},
$$
which is true.
\end{proof}


\subsection{Set-theoretic Drinfeld twist}

Suppose that $(X, T)$ is a set-theoretic solution to the  braid equation (BE) and there exist $F \in Sym(X \times X)$  and $\Phi, \Psi \in Sym(X\times X\times X)$ such that
\bea
F_{12} \Psi&=&F_{23} \Phi\nn,\\
\Phi T_{23}&=&T_{23} \Phi\nn,\\
\Psi T_{12}&=&T_{12} \Psi \nn,
\eea
then $R=F T F^{-1}$ also satisfies the braid equation
\bea
R_{12} R_{23} R_{12}=R_{23} R_{12} R_{23}.\nn
\eea

Using the notations of the previous section we put
$$
S^B = P R^B,~~S^C = P R^C.
$$
Then the relations 
$$
R_{12}^B R_{13}^B  R_{23}^B  = R_{23}^B  R_{13}^B  R_{12}^B,~~~R_{12}^C R_{13}^C  R_{23}^C  = R_{23}^C  R_{13}^C  R_{12}^C
$$
ensure the braid relations
$$
S_{12}^B S_{23}^B  S_{12}^B  = S_{23}^B  S_{12}^B  S_{23}^B,~~~S_{12}^C S_{23}^C S_{12}^C  = S_{23}^C  S_{12}^C  S_{23}^C.
$$
The relations
$$
R_{12}^C R_{23}  R_{13}  = R_{13}  R_{23}  R_{12}^C,~~~R_{23}^B R_{12}  R_{13}  = R_{13}  R_{12}  R_{23}^B,
$$
induce the following
$$
S_{12}^C R_{23} R_{13}  = R_{23}  R_{13}  S_{12}^C,~~~S_{23}^B R_{12} R_{13}  = R_{12}  R_{13}  S_{23}^B.
$$
We get the $R$-matrix
$$
\mathcal{R} = R_{41} R^B_{13} R^C_{24} R^{-1}_{23} = P_{12} P_{24} (R_{23} S^B_{13} S^C_{24} R^{-1}_{23}),
$$
and the elements
$$
\mathcal{R}_{12} =  P_{12} P_{24} (R_{23} S^B_{13} S^C_{24} R^{-1}_{23}),
$$
$$
\mathcal{R}_{13} =  P_{15} P_{26} (R_{25} S^B_{15} S^C_{26} R^{-1}_{25}),
$$
$$
\mathcal{R}_{23} =  P_{35} P_{46} (R_{45} S^B_{35} S^C_{46} R^{-1}_{45}).
$$
It is not difficult to prove

\begin{lemma}
The relation
$$
\mathcal{R}_{12} \mathcal{R}_{13}  \mathcal{R}_{23}  = \mathcal{R}_{23}  \mathcal{R}_{13}  \mathcal{R}_{12}
$$
provides the following
$$
\mathcal{S}_{12} \mathcal{S}_{23} \mathcal{S}_{12} = \mathcal{S}_{23} \mathcal{S}_{12} \mathcal{S}_{23},
$$
where
$$
\mathcal{S}_{12} = R_{23} S^B_{13} S^C_{24} R^{-1}_{23},~~\mathcal{S}_{23} = R_{45} S^B_{35} S^C_{46} R^{-1}_{45}.
$$
\end{lemma}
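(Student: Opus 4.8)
The plan is to recognise this lemma as the set-theoretic form, for the set $X:=B\times C$, of the classical dictionary ``$\mathcal{R}$ solves the YBE $\iff$ $P^X\mathcal{R}$ solves the braid equation'' recalled in the Introduction, where $P^X$ is the flip on $X\times X$. I would work in the six-slot labelling of $(B\times C)^3$ used above: slots $1,2$ carry the first copy of $B\times C$, slots $3,4$ the second, slots $5,6$ the third, with the $B$-coordinates in the odd slots and the $C$-coordinates in the even ones. The flip $P^X_{kl}$ exchanging the $k$-th and $l$-th copies of $X$ is then $P^X_{12}=P_{13}P_{24}$, $P^X_{13}=P_{15}P_{26}$, $P^X_{23}=P_{35}P_{46}$. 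The first step is the identity
$$
\mathcal{S}_{ij}=P^X_{ij}\,\mathcal{R}_{ij}\qquad\text{for }(ij)\in\{(12),(13),(23)\},
$$
which is immediate from the displayed formulas: since $S^B=PR^B$ and $S^C=PR^C$, each $\mathcal{R}_{ij}$ is by inspection $P^X_{ij}$ followed by a product of the form $R_{\bullet}\,S^B_{\bullet}\,S^C_{\bullet}\,R^{-1}_{\bullet}$, and $P^X_{ij}$ is an involution.

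Second, I would record the two families of relations that make the reduction work. The three flips $P^X_{12},P^X_{23},P^X_{13}$ generate a copy of the symmetric group $S_3$ inside $\Sym((B\times C)^3)$; in particular
$$
P^X_{12}P^X_{23}P^X_{12}=P^X_{13}=P^X_{23}P^X_{12}P^X_{23},
$$
which one checks by restricting to the odd slots $\{1,3,5\}$ and to the even slots $\{2,4,6\}$ separately and computing in $S_3$ twice. Moreover, conjugation by a flip permutes the $\mathcal{R}$'s as expected:
$$
P^X_{23}\mathcal{R}_{12}P^X_{23}=\mathcal{R}_{13},\quad P^X_{12}\mathcal{R}_{23}P^X_{12}=\mathcal{R}_{13},\quad P^X_{12}\mathcal{R}_{13}P^X_{12}=\mathcal{R}_{23},\quad P^X_{23}\mathcal{R}_{13}P^X_{23}=\mathcal{R}_{12},
$$
which is nothing but the fact that $\mathcal{R}_{kl}$ denotes $\mathcal{R}$ acting on the $k$-th and $l$-th copies of $X$, or a direct slot-by-slot check against the explicit formulas for $\mathcal{R}_{12},\mathcal{R}_{13},\mathcal{R}_{23}$.

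Third, the computation is then mechanical. Substituting $\mathcal{S}_{ij}=P^X_{ij}\mathcal{R}_{ij}$ into $\mathcal{S}_{12}\mathcal{S}_{23}\mathcal{S}_{12}$ and moving every flip to the left using the conjugation relations above, the accumulated prefix is $P^X_{12}P^X_{23}P^X_{12}=P^X_{13}$, so that $\mathcal{S}_{12}\mathcal{S}_{23}\mathcal{S}_{12}=P^X_{13}\,\mathcal{R}_{23}\mathcal{R}_{13}\mathcal{R}_{12}$; the symmetric manipulation gives $\mathcal{S}_{23}\mathcal{S}_{12}\mathcal{S}_{23}=P^X_{13}\,\mathcal{R}_{12}\mathcal{R}_{13}\mathcal{R}_{23}$. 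Hence the hypothesis $\mathcal{R}_{12}\mathcal{R}_{13}\mathcal{R}_{23}=\mathcal{R}_{23}\mathcal{R}_{13}\mathcal{R}_{12}$ (which holds in particular for the $\mathcal{R}$ of Theorem~\ref{t}) turns the two right-hand sides into the same map, and the braid equation $\mathcal{S}_{12}\mathcal{S}_{23}\mathcal{S}_{12}=\mathcal{S}_{23}\mathcal{S}_{12}\mathcal{S}_{23}$ follows (indeed the argument is reversible, giving an equivalence). I do not expect a genuine obstacle: the whole argument is the standard YBE/BE correspondence applied to $X=B\times C$, and the only thing demanding care is the bookkeeping of which elementary transposition $P_{ab}$ acts on which of the six slots when verifying the $S_3$-relations and the conjugation relations.
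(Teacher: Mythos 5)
Your argument is correct and is exactly the standard ``$\mathcal{R}$ satisfies the YBE if and only if $P^X\mathcal{R}$ satisfies the braid equation'' computation, transported to $X=B\times C$ with the six-slot bookkeeping; this is precisely what the paper intends, since it omits the proof with the remark that it is not difficult. Note that you have implicitly corrected a typo in the source: the displayed formula for $\mathcal{R}_{12}$ should read $P_{13}P_{24}\bigl(R_{23}S^B_{13}S^C_{24}R^{-1}_{23}\bigr)$ rather than $P_{12}P_{24}(\cdots)$, which is forced by $\mathcal{R}=R_{41}R^B_{13}R^C_{24}R^{-1}_{23}$ together with $S^B=PR^B$, $S^C=PR^C$, and is needed for your identification $\mathcal{S}_{ij}=P^X_{ij}\mathcal{R}_{ij}$.
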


Hence, we have shown that if we take $T = S^B S^C$, which evidently satisfies the BE, then $\mathcal{S}=F T F^{-1}$ also satisfies the BE, where $F = F_{12} = R_{23}$ and $F_{23} = R_{45}$. 

Let us take
$$
\Phi =  R_{23} R_{25}, ~~~\Psi = R_{45} R_{25}.
$$
Then the first equality of the system has the form
$$
R_{23} \cdot (R_{45} R_{25}) = R_{45} \cdot (R_{23} R_{25}).
$$
This relation holds due to commutativity of $R_{23}$ and $R_{45}.$ 

From Lemma \ref{rel} it follows

\begin{lemma} \label{rel1}
The next relations hold
\begin{align}
S_{15}^B R_{41} R_{45} = R_{41}  R_{45} S_{15}^B, \label{brb1}\\
S_{35}^B R_{23} R_{25} = R_{23}  R_{25} S_{35}^B, \label{brb2}\\
S^B_{13}  R_{61} R_{63} =   R_{61} R_{63} S^B_{13}, \label{brb3}\\
S_{26}^C R_{63} R_{23} = R_{63} R_{23} S_{26}^C, \label{brc1}\\
S_{24}^C R_{45} R_{25} = R_{45} R_{25} S_{24}^C, \label{brc2}\\
S_{46}^C R_{61} R_{41} = R_{61} R_{41} S_{46}^C \label{brc3}.
\end{align}
\end{lemma}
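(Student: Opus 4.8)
The plan is to obtain each of the six identities (\ref{brb1})--(\ref{brc3}) from its non-inverted counterpart in Lemma~\ref{rel} by a single left multiplication with the transposition attached to the $S$-factor, and then commuting that transposition through to the right. Recall that everything here lives in $X^{6}$, with the odd positions $1,3,5$ carrying copies of $B$ and the even positions $2,4,6$ carrying copies of $C$, so that $S^{B}_{ij}=P_{ij}R^{B}_{ij}$ is localised on two $B$-slots and $S^{C}_{ij}=P_{ij}R^{C}_{ij}$ on two $C$-slots. Besides the relations of Lemma~\ref{rel}, the only input needed is the conjugation rule $P_{ab}\,R_{cd}\,P_{ab}=R_{c'd'}$, where $c',d'$ are $c,d$ with the labels $a$ and $b$ interchanged; this is the $P_{ij}$-analogue of Lemma~\ref{form}. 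Note that every transposition used below exchanges either two $B$-slots or two $C$-slots, so it leaves the $C$-leg of an $R$-factor where it is, moves only the $B$-leg, and in particular never turns an $R$-factor into $R^{-1}$.

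Concretely, for (\ref{brb1}) I start from (\ref{rb1}), namely $R_{15}^{B}R_{41}R_{45}=R_{45}R_{41}R_{15}^{B}$, and multiply on the left by $P_{15}$. On the left side $P_{15}R^{B}_{15}=S^{B}_{15}$, giving $S^{B}_{15}R_{41}R_{45}$. On the right side $P_{15}R_{45}=R_{41}P_{15}$ and then $P_{15}R_{41}=R_{45}P_{15}$ carry $P_{15}$ past the two $R$-factors and, in the process, restore their original order, so the right side becomes $R_{41}R_{45}P_{15}R^{B}_{15}=R_{41}R_{45}S^{B}_{15}$; this is (\ref{brb1}). The other five are obtained by the identical three-line manipulation: (\ref{brb2}) from (\ref{rb3}) via $P_{35}$, (\ref{brb3}) from (\ref{rb5}) via $P_{13}$, (\ref{brc1}) from (\ref{rc1}) via $P_{26}$, (\ref{brc2}) from (\ref{rc3}) via $P_{24}$, and (\ref{brc3}) from (\ref{rc5}) via $P_{46}$. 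In each case the reversal of the two adjacent $R$-factors on the right-hand side of the relation from Lemma~\ref{rel} is exactly undone as $P_{ab}$ is commuted past them, which is precisely what makes the commutator form appear.

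I do not expect any real obstacle: the whole content is the index bookkeeping that, for each of the six relations, $P_{ab}$ commutes through the two $R$-factors with the sole effect of swapping their legs. Since (\ref{rb1}), (\ref{rb3}), (\ref{rb5}), (\ref{rc1}), (\ref{rc3}), (\ref{rc5}) are exactly the non-inverted relations supplied by Lemma~\ref{rel}, and $P_{15},P_{35},P_{13},P_{26},P_{24},P_{46}$ are exactly the transpositions named by the six $S$-factors in the statement, this accounts for all of (\ref{brb1})--(\ref{brc3}). The situation is the same as the one flagged in the remark after Proposition~\ref{virt}: the conjugation identities for the $P_{ij}$ do all the work, and no further machinery is needed.
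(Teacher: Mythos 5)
Your proof is correct, and it is essentially the paper's own (implicit) argument: the paper states only that Lemma~\ref{rel1} ``follows from Lemma~\ref{rel},'' and the mechanism it has in mind is exactly the one you carry out --- left-multiply the non-inverted relation by the transposition $P_{ab}$ hidden in $S=PR$ and push $P_{ab}$ through the two $R$-factors, swapping their legs and restoring their order --- the same manipulation the paper displays just above the lemma when it passes from $R^{C}_{12}R_{23}R_{13}=R_{13}R_{23}R^{C}_{12}$ to $S^{C}_{12}R_{23}R_{13}=R_{23}R_{13}S^{C}_{12}$. Your added observation that each transposition permutes only $B$-slots or only $C$-slots, so no $R$-factor changes type or orientation, is exactly the bookkeeping needed to make the derivation airtight.
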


The second equality of the system has the form
$$
( R_{23} R_{25}) (S_{35}^B S_{46}^C) = (S_{35}^B S_{46}^C) ( R_{23} R_{25}).
$$
Using (\ref{brb2}) and commutativity we rewrite the left hand side as
$$
 (R_{23} R_{25} S_{35}^B) S_{46}^C = S_{35}^B  R_{23} R_{25} S_{46}^C 
= S_{35}^B S_{46}^C  R_{23} R_{25}. 
$$
The third equality of the system has the form
$$
(R_{45} R_{25}) (S_{13}^B S_{24}^C) = (S_{13}^B S_{24}^C) (R_{45} R_{25}).
$$
To check  this equality we use the similar arguments as above.

\bigskip

\section{Groups  $Y_n$}
\label{sec-Gr}
In the previous two sections we constructed linear solution of the YBE on a tensor product $B \otimes C$ of two quasi-triangular  Hopf algebras $B$ and $C$ and a 
set-theoretic solution on the direct product $B \times C$ of two YB sets $B$ and $C$. There is a natural question: Is there any group such that these solutions are representations of this group. This section is devoted to this question.

\subsection{Definition}
We define groups  $Y_n$ for $n\geq 2$. Let us  start with the first non-trivial example $Y_3$. This group is generated by three families of elements:
$$
b_{13}, ~~b_{15}, ~~b_{35}, ~~c_{24},~~c_{26},~~c_{46},~~d_{23},~~d_{25}, ~~d_{41},~~d_{45}, ~~d_{61},~~d_{63},
$$
and is defined by relations:

-- {\it Yang-Baxter relations}
$$
b_{13} b_{15} b_{35} =  b_{35} b_{15} b_{13}, ~~~c_{24} c_{26} c_{46} =  c_{46} c_{26} c_{24};
$$

-- {\it commutativity relations}
$$
b_{ij} c_{kl} = c_{kl} b_{ij},~~b_{ij} \in \{ b_{13}, b_{15},  b_{35} \}, ~~c_{kl} \in \{ c_{24}, c_{26},  c_{46}\};
$$
$$
b_{ij} d_{kl} = d_{kl} b_{ij},~~c_{ij} d_{kl} = d_{kl} c_{ij},~d_{ij} d_{kl} = d_{kl} d_{ij},~ \mbox{where all indices}~ i, j, k, l~\mbox{are different};
$$

-- {\it mixed   relations}
$$
b_{13} d_{61} d_{63} =  d_{63}  d_{61} b_{13},~~b_{15} d_{41} d_{45} =  d_{45}  d_{41} b_{15},~~b_{35} d_{23} d_{25} =  d_{25}  d_{23} b_{35},
$$
$$
c_{24} d_{45} d_{25} =  d_{25}  d_{45} c_{24},~~c_{26} d_{63} d_{23} =  d_{23}  d_{63} c_{26},~~c_{46} d_{61} d_{41} =  d_{41}  d_{61} c_{46}.
$$

\medskip

One can see that
$$
\langle b_{13}, b_{15},  b_{35} \rangle \cong \langle c_{24}, c_{26}, c_{46} \rangle  \cong VP_3^+,
$$
$$
\langle b_{13}, b_{15},  b_{35},  c_{24}, c_{26}, c_{46} \rangle  \cong VP_3^+ \times VP_3^+.
$$

\begin{proposition} \label{hom}
There is a homomorphism of $\tau : Y_3 \to VP_6$ that is defined on the generators:
$$
b_{ij} \mapsto \lambda_{ij},~~c_{kl} \mapsto \lambda_{kl},~~d_{ij}  \mapsto \lambda_{ji}.
$$
\end{proposition}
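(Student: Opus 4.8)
The plan is to invoke the defining (universal) property of a group presentation: since $Y_3$ is presented by the twelve listed generators and the relations above, to obtain a homomorphism $\tau\colon Y_3\to VP_6$ it suffices to fix the images of the generators and then verify that these images satisfy, inside $VP_6$, every defining relation of $Y_3$. Recall that $VP_6$ is presented by the commutativity relations (\ref{rel}), $\lambda_{ij}\lambda_{kl}=\lambda_{kl}\lambda_{ij}$ for $\{i,j\}\cap\{k,l\}=\emptyset$, and the relations (\ref{relation}), $\lambda_{ki}\lambda_{kj}\lambda_{ij}=\lambda_{ij}\lambda_{kj}\lambda_{ki}$ for distinct $k,i,j$; note that (\ref{relation}) is unchanged if its two sides are interchanged. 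Under $\tau$ the $b$- and $c$-generators keep their index pair, while the $d$-generators get their pair transposed, $d_{ij}\mapsto\lambda_{ji}$; all twelve target letters have indices in $\{1,\dots,6\}$, hence are genuine generators of $VP_6$.

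The Yang--Baxter relations of $Y_3$ are immediate: $b_{13}b_{15}b_{35}=b_{35}b_{15}b_{13}$ maps to $\lambda_{13}\lambda_{15}\lambda_{35}=\lambda_{35}\lambda_{15}\lambda_{13}$, which is (\ref{relation}) with $(k,i,j)=(1,3,5)$, and the $c$-relation becomes (\ref{relation}) with $(k,i,j)=(2,4,6)$. Each commutativity relation of $Y_3$ maps to some $\lambda_{pq}\lambda_{rs}=\lambda_{rs}\lambda_{pq}$, so one only has to see that $\{p,q\}\cap\{r,s\}=\emptyset$ and then quote (\ref{rel}): for the $b$--$c$ relations this is automatic because $b$-indices lie in $\{1,3,5\}$ and $c$-indices in $\{2,4,6\}$, and the $b$--$d$, $c$--$d$, $d$--$d$ relations are imposed in $Y_3$ only when all four indices differ, a property preserved by the transposition $d_{ij}\mapsto\lambda_{ji}$.

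The remaining, and most interesting, check is that each of the six mixed relations becomes an instance of (\ref{relation}); this is exactly where the transposition on the $d$-generators is used. For example $b_{13}d_{61}d_{63}=d_{63}d_{61}b_{13}$ maps to $\lambda_{13}\lambda_{16}\lambda_{36}=\lambda_{36}\lambda_{16}\lambda_{13}$, which is (\ref{relation}) with $(k,i,j)=(1,3,6)$; similarly $b_{15}d_{41}d_{45}=d_{45}d_{41}b_{15}$ and $b_{35}d_{23}d_{25}=d_{25}d_{23}b_{35}$ give (\ref{relation}) with $(k,i,j)=(1,5,4)$ and $(3,5,2)$, while $c_{24}d_{45}d_{25}=d_{25}d_{45}c_{24}$, $c_{26}d_{63}d_{23}=d_{23}d_{63}c_{26}$ and $c_{46}d_{61}d_{41}=d_{41}d_{61}c_{46}$ give (\ref{relation}) with $(k,i,j)=(5,2,4)$, $(3,2,6)$ and $(1,4,6)$ after reading (\ref{relation}) from right to left. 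With all defining relations of $Y_3$ verified among the images, $\tau$ is a well-defined group homomorphism.

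I do not expect a genuine obstacle here: the proof is entirely a bookkeeping of index patterns, and the statement is essentially designed so that the six mixed relations of $Y_3$ line up with six Yang--Baxter relations of $VP_6$. The only points requiring care are choosing the images of the $d$-generators as the transposed $\lambda$'s (otherwise the mixed relations would have no counterpart in $VP_6$) and, for the three mixed relations coming from the $c$-generators, remembering that (\ref{relation}) is being matched with its two sides swapped.
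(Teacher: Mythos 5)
Your proposal is correct and follows essentially the same route as the paper: define $\tau$ on generators and verify that each defining relation of $Y_3$ maps to an instance of the commutativity relation (\ref{rel}) or the Yang--Baxter relation (\ref{relation}) of $VP_6$, with the index transposition on the $d$-generators making the six mixed relations line up. Your write-up is in fact slightly more explicit than the paper's, since you name the triple $(k,i,j)$ realizing each mixed relation and note where the two sides of (\ref{relation}) must be swapped.
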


\begin{proof}
The  image of $Y_3$ under $\tau$ is a subgroup of $VP_6$ which is generated by elements
\begin{align} \label{list}
\lambda_{13},~~\lambda_{15},~~\lambda_{35},~~\lambda_{24},~~\lambda_{26},~~\lambda_{46},~~
\lambda_{32},~~\lambda_{52},~~\lambda_{14},~~\lambda_{54},~~\lambda_{16},~~\lambda_{36}.
\end{align}
To prove that this map on the generators defines a homomorphism of $Y_3$
into $VP_6$ we need to prove that all relations of $Y_3$ go to relations of  $VP_6$ under $\tau$.
The Yang--Baxter relations go to  relations
$$
\lambda_{13} \lambda_{15} \lambda_{35} =  \lambda_{35} \lambda_{15} \lambda_{13},~~
\lambda_{24} \lambda_{26} \lambda_{46} =  \lambda_{46} \lambda_{26} \lambda_{24},
$$
which hold in $VP_6$. The commutativity relations go to  relations of the form
$$
\lambda_{ij} \lambda_{kl} = \lambda_{kl} \lambda_{ij},
$$ 
where all indexes are different. These relations also hold in $VP_6$.

The mixed relations go to  relations
$$
\lambda_{13} \lambda_{16} \lambda_{36} =  \lambda_{36}  \lambda_{16} \lambda_{13},~~
\lambda_{15} \lambda_{14} \lambda_{54}=  \lambda_{54} \lambda_{14} \lambda_{15},~~
\lambda_{35} \lambda_{32} \lambda_{52} =  \lambda_{52}  \lambda_{32} \lambda_{35},
$$
$$
\lambda_{24} \lambda_{54} \lambda_{52} =  \lambda_{52}  \lambda_{54} \lambda_{24},~~
\lambda_{26} \lambda_{36} \lambda_{32} =  \lambda_{32} \lambda_{36} \lambda_{26},~~
\lambda_{46} \lambda_{16} \lambda_{14} =  \lambda_{14}  \lambda_{16} \lambda_{46},
$$
that are relations of $VP_6$. 
\end{proof}

\begin{question}
Is it true that the homomorphism $\tau$ which was  constructed in Proposition~\ref{hom} is injective?
\end{question}

Let us define the elements 
$$
\Lambda_{12} = d_{41} b_{13} c_{24} d_{23}^{-1},~~\Lambda_{13} = d_{61} b_{15} c_{26} d_{25}^{-1},~~\Lambda_{23} = d_{63} b_{35} c_{46} d_{45}^{-1}
$$ 
in $Y_3$. As a corollary of Theorem \ref{t} we have 

\begin{corollary}
In the group $\langle \Lambda_{12}, \Lambda_{13}, \Lambda_{23} \rangle \leq Y_3$ the following relation holds
$$
\Lambda_{12} \Lambda_{13} \Lambda_{23} = \Lambda_{23} \Lambda_{13} \Lambda_{12} .
$$
\end{corollary}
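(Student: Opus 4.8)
The plan is to observe that the group $Y_3$ is presented by \emph{exactly} the relations invoked in the proof of Theorem~\ref{t}, so the asserted braid-type identity follows by replaying that computation verbatim inside $Y_3$. To fix the correspondence, use the dictionary
$$
d_{ij} \longleftrightarrow R_{ij}, \qquad b_{ij} \longleftrightarrow R^B_{ij}, \qquad c_{ij} \longleftrightarrow R^C_{ij},
$$
where $R : C \times B \to C \times B$, $R^B$, $R^C$ and the six-fold product are as in Section~\ref{sec-Ext2} (factors $1,3,5$ carry $B$, factors $2,4,6$ carry $C$). Under this dictionary the words $\Lambda_{12} = d_{41} b_{13} c_{24} d_{23}^{-1}$, $\Lambda_{13} = d_{61} b_{15} c_{26} d_{25}^{-1}$, $\Lambda_{23} = d_{63} b_{35} c_{46} d_{45}^{-1}$ are precisely the operators $\mathcal{R}_{12} = R_{41} R^B_{13} R^C_{24} R^{-1}_{23}$, $\mathcal{R}_{13} = R_{61} R^B_{15} R^C_{26} R^{-1}_{25}$, $\mathcal{R}_{23} = R_{63} R^B_{35} R^C_{46} R^{-1}_{45}$ manipulated in that proof.

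First I would check that the dictionary sends each defining relation of $Y_3$ to a relation available in the proof of Theorem~\ref{t}. The two Yang--Baxter relations of $Y_3$ go to the Yang--Baxter equations for $R^B$ and for $R^C$, in the index labellings $\{1,3,5\}$ and $\{2,4,6\}$. The commutativity relations of $Y_3$ go to commutativity of operators supported on disjoint tensor factors; in particular every $b$-generator commutes with every $c$-generator because the $b$-indices are odd and the $c$-indices even, which matches the fact that $R^B$- and $R^C$-operators always commute. The six mixed relations of $Y_3$ are literally the relations (\ref{rb1}), (\ref{rb3}), (\ref{rb5}), (\ref{rc1}), (\ref{rc3}), (\ref{rc5}) of Lemma~\ref{rel}. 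The relations (\ref{rb2}), (\ref{rb4}), (\ref{rc2}), (\ref{rc4}) that also occur in the proof are not extra data: each is a purely group-theoretic consequence of (\ref{rb1}), (\ref{rb3}), (\ref{rc1}), (\ref{rc3}) --- obtained by left and right multiplication by generators and their inverses (no commutativity needed) --- and so holds in $Y_3$ as well.

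Granting this, the entire chain of rewritings in the proof of Theorem~\ref{t} --- every step being a commutativity move, a Yang--Baxter move for $R^B$ or $R^C$, or an application of one of (\ref{rb1})--(\ref{rc5}) or a consequence thereof --- is a legitimate sequence of equalities already in $Y_3$, and it transforms $\Lambda_{12} \Lambda_{13} \Lambda_{23}$ into $\Lambda_{23} \Lambda_{13} \Lambda_{12}$. The step that requires the most care --- really bookkeeping rather than a genuine difficulty --- is confirming that nothing outside the defining list of $Y_3$ is used silently: one must verify that each commutativity move in the proof involves operators whose index sets are pairwise disjoint (so that the $Y_3$-relations, stated for distinct indices, apply), and that the closing regrouping of all $b$-letters past all $c$-letters is covered by the $b$--$c$ commutativity relations. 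One cannot instead short-circuit the argument through the homomorphism $\tau : Y_3 \to VP_6$ of Proposition~\ref{hom}, since the injectivity of $\tau$ is exactly the open question raised just after that proposition; the verification has to be the internal one described above.
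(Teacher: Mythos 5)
Your proposal is correct and is essentially the paper's own argument: the paper simply declares the identity to be "a corollary of Theorem \ref{t}," and the justification it has in mind is precisely the one you spell out, namely that the defining relations of $Y_3$ (together with the group-theoretic consequences you correctly derive in place of (\ref{rb2}), (\ref{rb4}), (\ref{rc2}), (\ref{rc4})) are exactly the identities used in the proof of Theorem \ref{t}, so that computation replays verbatim in $Y_3$. Your added care about not invoking the possibly non-injective $\tau : Y_3 \to VP_6$ is the right scruple, and your index-disjointness bookkeeping checks out against every commutativity move in that proof.
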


If we denote the image $\tau (\Lambda_{ij})$ by $N_{ij}$, then
\bea
N_{12} = \lambda_{14} \lambda_{13} \lambda_{24} \lambda_{32}^{-1},~~N_{13} = \lambda_{16} \lambda_{15} \lambda_{26} \lambda_{52}^{-1},~~
N_{23} = \lambda_{36} \lambda_{35} \lambda_{46} \lambda_{54}^{-1}\nn
\eea
and these elements satisfy the YB relation
$$
N_{12} N_{13} N_{23} = N_{23} N_{13} N_{12}.
$$

\medskip

The group $Y_n$ is generated by three families of generators,
$$
b_{2i-1, 2j-1},~~c_{2i, 2j},~~1 \leq i < j \leq n,~~d_{2k, 2l-1},~~1 \leq k \not= l \leq n,
$$
and is defined by relations

-- {\it Yang-Baxter relations}
$$
b_{2i-1, 2j-1} b_{2i-1, 2k-1} b_{2j-1, 2k-1} =  b_{2j-1, 2k-1} b_{2i-1, 2k-1} b_{2i-1, 2j-1}, ~~~c_{2i, 2j} c_{2i, 2k} c_{2j, 2k} = c_{2j, 2k} c_{2i, 2k} c_{2i, 2j},
$$
where all indices $i, j, k$ are pairwise distinct;

-- {\it commutativity relations}
$$
b_{2i-1, 2j-1} c_{2k,2l} = c_{2k,2l} b_{2i-1, 2j-1},~~1 \leq i \not= j \leq n,~~1 \leq k \not= l \leq n,
$$
$$
b_{2i-1, 2j-1} d_{2k, 2l-1} = d_{2k, 2l-1} b_{2i-1, 2j-1},~~c_{2i,2j} d_{2k, 2l-1} = d_{2k, 2l-1} c_{2i,2j},
~~d_{2i,2j-1} d_{2k, 2l-1} = d_{2k, 2l-1} d_{2i,2j-1},
$$
where in the last three relations all indices are pairwise distinct;

-- {\it mixed   relations}
$$
b_{2i-1, 2j-1} d_{2k,2i-1} d_{2k,2j-1} = d_{2k,2j-1} d_{2k,2i-1}  b_{2i-1, 2j-1},~~c_{2i, 2j} d_{2j,2k-1} d_{2i,2k-1} = d_{2i,2k-1} d_{2j,2k-1}  c_{2i, 2j},
$$
where all indices $i, j, k$ are pairwise distinct.

In particular, for $n=2$ we have
$$
Y_2 = \langle b_{13}, c_{24}, d_{23}, d_{24}~|~b_{13} c_{24} = c_{24} b_{13}, ~~d_{23} d_{24} =  d_{24} d_{23} \rangle \cong \mathbb{Z}^2 * \mathbb{Z}^2.
$$

\begin{proposition}
\label{homvp6}
There is a homomorphism $\tau:Y_n\rightarrow VP_{2n}$ given by
\bea
\tau: b_{ij}\mapsto \lambda_{ij}; \qquad \tau: c_{ij}\mapsto \lambda_{ij};\qquad \tau: d_{ij}\mapsto \lambda_{ji}.\nn
\eea
\end{proposition}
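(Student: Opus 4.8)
The plan is to verify that the defining relations of $Y_n$ are mapped by $\tau$ to relations holding in $VP_{2n}$. Since $Y_n$ is defined by a presentation, it suffices to check that each relator in the three families --- Yang--Baxter relations, commutativity relations, and mixed relations --- becomes a consequence of the relations \eqref{rel} and \eqref{relation} of $VP_{2n}$ after applying $\tau$. This is exactly the pattern already carried out for $Y_3$ in Proposition \ref{hom}, and the general case is a bookkeeping extension of that argument; the only subtlety is to make sure that the index-distinctness hypotheses in $Y_n$ guarantee the corresponding distinctness hypotheses needed for the $VP_{2n}$ relations.

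First I would treat the Yang--Baxter relations. Under $\tau$ the relation $b_{2i-1,2j-1} b_{2i-1,2k-1} b_{2j-1,2k-1} = b_{2j-1,2k-1} b_{2i-1,2k-1} b_{2i-1,2j-1}$ goes to $\lambda_{2i-1,2j-1}\lambda_{2i-1,2k-1}\lambda_{2j-1,2k-1} = \lambda_{2j-1,2k-1}\lambda_{2i-1,2k-1}\lambda_{2i-1,2j-1}$; since $i,j,k$ are pairwise distinct, the four indices involved (in the roles of $k,i,j$ in relation \eqref{relation}, namely $2i-1$ playing the role of the common first index, and $2j-1,2k-1$ the two others) are distinct odd numbers in $\{1,\dots,2n\}$, so this is precisely an instance of \eqref{relation}. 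The same applies verbatim to the $c$-family using even indices. Next, every commutativity relator $b_{ij}c_{kl}=c_{kl}b_{ij}$, $b_{ij}d_{kl}=d_{kl}b_{ij}$, $c_{ij}d_{kl}=d_{kl}c_{ij}$, $d_{ij}d_{kl}=d_{kl}d_{ij}$ maps to $\lambda_{\bullet\bullet}\lambda_{\bullet\bullet}=\lambda_{\bullet\bullet}\lambda_{\bullet\bullet}$ with all four indices pairwise distinct (using that $b$ has odd-odd indices, $c$ even-even, $d$ even-odd, and that $\tau(d_{ij})=\lambda_{ji}$ merely swaps the index order), hence is an instance of \eqref{rel}.

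The mixed relations are the main point to get right. The relator $b_{2i-1,2j-1} d_{2k,2i-1} d_{2k,2j-1} = d_{2k,2j-1} d_{2k,2i-1} b_{2i-1,2j-1}$ maps, using $\tau(d_{2k,2i-1})=\lambda_{2i-1,2k}$ and $\tau(d_{2k,2j-1})=\lambda_{2j-1,2k}$, to
\[
\lambda_{2i-1,2j-1}\,\lambda_{2i-1,2k}\,\lambda_{2j-1,2k} = \lambda_{2j-1,2k}\,\lambda_{2i-1,2k}\,\lambda_{2i-1,2j-1},
\]
which is exactly relation \eqref{relation} with common first index $2i-1$ and the two other indices $2j-1$ and $2k$; since $i,j,k$ are pairwise distinct these three indices $2i-1, 2j-1, 2k$ are pairwise distinct, so the relation holds in $VP_{2n}$. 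Symmetrically, $c_{2i,2j} d_{2j,2k-1} d_{2i,2k-1} = d_{2i,2k-1} d_{2j,2k-1} c_{2i,2j}$ maps, using $\tau(d_{2j,2k-1})=\lambda_{2k-1,2j}$ and $\tau(d_{2i,2k-1})=\lambda_{2k-1,2i}$, to
\[
\lambda_{2i,2j}\,\lambda_{2k-1,2j}\,\lambda_{2k-1,2i} = \lambda_{2k-1,2i}\,\lambda_{2k-1,2j}\,\lambda_{2i,2j};
\]
reading the right-hand side as the left-hand side of \eqref{relation} with common first index $2k-1$ and other indices $2i,2j$, this is again an instance of \eqref{relation}. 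I expect the bulk of the write-up to be exactly this case-by-case matching; the one thing to be careful about is the precise correspondence between the ordered index pairs in $Y_n$ and in $VP_{2n}$ induced by $\tau(d_{ij})=\lambda_{ji}$, and confirming that pairwise-distinctness of $\{i,j,k\}$ in $Y_n$ translates into pairwise-distinctness of the occurring indices in $\{1,\dots,2n\}$ (which it does because odd and even values never collide, and $2i-1\ne 2j-1$, $2i\ne 2j$ whenever $i\ne j$). Once all three families are checked, $\tau$ extends to a group homomorphism $Y_n\to VP_{2n}$, completing the proof.
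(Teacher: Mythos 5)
Your proof is correct and follows exactly the route the paper intends: the paper states only that ``the proof is straightforward,'' deferring to the detailed relation-by-relation check already carried out for $Y_3$ in Proposition \ref{hom}, and your case analysis (Yang--Baxter, commutativity, and mixed relators each mapping to instances of \eqref{rel} or \eqref{relation} in $VP_{2n}$, with the odd/even index parity guaranteeing the required distinctness) is precisely that check in general. The only blemish is the slip ``four indices'' where three are meant in the Yang--Baxter case; it does not affect the argument.
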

The proof is straightforward.

The principal question of this section has the following answer:

\begin{proposition}
A Yang--Baxter set $(B \times C, \mathcal{R})$, where $\mathcal{R} = R_{41} R^B_{13} R^C_{24} R^{-1}_{23}$ defines a representation of $Y_n$, $n \geq 2$ in $\Sym\left((B \times C)^n \right)$.
\end{proposition}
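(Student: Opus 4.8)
The plan is to write the representation down explicitly on the generators of $Y_n$ and then check that each of the three families of defining relations — the Yang--Baxter relations, the commutativity relations, and the mixed relations — is sent to a genuine identity in $\Sym\left((B\times C)^n\right)$. The point is that every relation of $Y_n$ involves operators supported on at most three tensor factors, so after an appropriate renaming of those factors it becomes either the Yang--Baxter equation for $R^B$, the Yang--Baxter equation for $R^C$, or one of the two compatibility relations imposed on $R$, and nothing else is needed.

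First I would identify $(B\times C)^n$ with $B\times C\times B\times C\times\cdots\times B\times C$ ($2n$ factors), so that the odd positions $1,3,\dots,2n-1$ carry copies of $B$ and the even positions $2,4,\dots,2n$ carry copies of $C$. For $a\ne b$ both odd let $R^B_{ab}$ act as $R^B$ on positions $a,b$ and as the identity on the rest; for $a\ne b$ both even let $R^C_{ab}$ act as $R^C$ on positions $a,b$; and for $a$ even and $b$ odd let $R_{ab}$ act as $R$ on the pair (position $a$, position $b$) and as the identity elsewhere. (One assumes, as in the rest of this circle of ideas, that $R^B$, $R^C$ and $R$ are invertible — which is needed already for $\mathcal R$ to be defined — so that all these maps lie in $\Sym\left((B\times C)^n\right)$.) The representation is then defined by $\varphi_n^{\mathcal R}(b_{2i-1,2j-1})=R^B_{2i-1,2j-1}$, $\varphi_n^{\mathcal R}(c_{2i,2j})=R^C_{2i,2j}$, $\varphi_n^{\mathcal R}(d_{2k,2l-1})=R_{2k,2l-1}$, and it remains to verify the relations.

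The commutativity relations are automatic: whenever all the indices that occur are pairwise distinct, the two operators in question have disjoint supports and therefore commute; this settles the relations of type $b$--$c$, $b$--$d$, $c$--$d$ and $d$--$d$, and the $b$'s commute with the $c$'s unconditionally since the former are supported on odd positions and the latter on even positions. For a Yang--Baxter relation $b_{2i-1,2j-1}\,b_{2i-1,2k-1}\,b_{2j-1,2k-1}=b_{2j-1,2k-1}\,b_{2i-1,2k-1}\,b_{2i-1,2j-1}$ with $i<j<k$, all three operators are supported on the positions $\{2i-1,2j-1,2k-1\}$, so renaming these in increasing order to $1,2,3$ turns the relation into $R^B_{12}R^B_{13}R^B_{23}=R^B_{23}R^B_{13}R^B_{12}$, which holds because $(B,R^B)$ is a Yang--Baxter set; the $c$-relations are treated in the same way with $(C,R^C)$.

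For the mixed relations, $b_{2i-1,2j-1}\,d_{2k,2i-1}\,d_{2k,2j-1}=d_{2k,2j-1}\,d_{2k,2i-1}\,b_{2i-1,2j-1}$ is supported on $\{2k,2i-1,2j-1\}$; renaming the $C$-position $2k$ to $1$ and the $B$-positions $2i-1<2j-1$ to $2,3$ turns it into $R^B_{23}R_{12}R_{13}=R_{13}R_{12}R^B_{23}$, the first compatibility relation imposed on $R$ (cf.~(\ref{rb5})). Likewise $c_{2i,2j}\,d_{2j,2k-1}\,d_{2i,2k-1}=d_{2i,2k-1}\,d_{2j,2k-1}\,c_{2i,2j}$ is supported on $\{2i,2j,2k-1\}$ and, renaming the $C$-positions $2i<2j$ to $1,2$ and the $B$-position $2k-1$ to $3$, becomes $R^C_{12}R_{23}R_{13}=R_{13}R_{23}R^C_{12}$, the second compatibility relation (cf.~(\ref{rc3})). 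Both of these hold by hypothesis, so $\varphi_n^{\mathcal R}$ is a well-defined homomorphism $Y_n\to\Sym\left((B\times C)^n\right)$. I would finish by noting the consistency check that $\varphi_n^{\mathcal R}$ carries the distinguished elements $\Lambda_{ij}$ — the obvious generalization of $\Lambda_{12},\Lambda_{13},\Lambda_{23}$ — to the operators $\mathcal R_{ij}$, so that restricting $\varphi_n^{\mathcal R}$ along $\langle\Lambda_{ij}\rangle$ recovers the Yang--Baxter representation furnished by Theorem~\ref{t}. The only real obstacle is bookkeeping: keeping the $B$ / $C$ types of the positions straight under the renamings, and making sure the composition order lands on the correct side so that one arrives exactly at the relations as stated rather than at $P$-conjugates of them.
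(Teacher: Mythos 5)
Your proof is correct and is exactly the verification the paper intends: the paper states this proposition without proof, but the defining relations of $Y_n$ are designed precisely so that, under $b_{2i-1,2j-1}\mapsto R^B_{2i-1,2j-1}$, $c_{2i,2j}\mapsto R^C_{2i,2j}$, $d_{2k,2l-1}\mapsto R_{2k,2l-1}$, the Yang--Baxter relations become the YBE for $R^B$ and $R^C$, the commutativity relations become disjoint-support (or odd/even-support) commutation, and the mixed relations become the compatibility identities of Lemma \ref{rel} -- which is what you check. Your closing observation that the $\Lambda_{ij}$ are carried to the operators $\mathcal{R}_{ij}$ correctly ties the representation back to Theorem \ref{t}.
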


\subsection{Infinitesimal  Lie algebras of $P_n$ }

The pure braid group $P_n$ is accompanied with the infinitesimal Lie algebra $L(P_n)$.
By analogy in  \cite{BEE} (see also \cite{BMVW}) it was defined a Lie algebra $L(VP_n)$ for the group $VP_n$. In this section we define a Lie algebra associated with the group $Y_n$.

 A Lie algebra  $L(Y_3)$ is generated by three families of elements:
$$
B_{13}, ~~B_{15}, ~~B_{35}, ~~C_{24},~~C_{26},~~C_{46},~~D_{23},~~D_{25}, ~~D_{41},~~D_{45}, ~~D_{61},~~D_{63},
$$
and is defined by relations:

-- {\it Yang-Baxter relations}
$$
[B_{13},  B_{15}] + [B_{13},  B_{35}] + [B_{15},  B_{35}] = 0, ~~ [C_{24},  C_{26}] + [C_{24},  C_{46}] + [C_{26},  C_{46}] = 0;
$$

-- {\it commutativity  relations}
$$
[B_{ij},  C_{kl}] = 0,~~B_{ij} \in \{ B_{13}, B_{15},  B_{35} \}, ~~C_{kl} \in \{ C_{24}, C_{26}, C_{46}\},
$$
$$
[B_{ij}, D_{kl}] = [C_{ij}, D_{kl}] = [D_{ij}, D_{kl}] =0,~\mbox{where all indices}~ i, j, k, l~\mbox{are different};
$$

-- {\it mixed   relations}
$$
[B_{13},  D_{61}] + [B_{13},  D_{63}] + [D_{61},  D_{63}] = 0, ~~[B_{15},  D_{41}] + [B_{15},  D_{45}] + [D_{41},  D_{45}] = 0,
$$
$$
[B_{35},  D_{23}] + [B_{35},  D_{25}] + [D_{23},  D_{25}] = 0,
$$
$$
[C_{24},  D_{25}] + [C_{24},  D_{45}] - [D_{25},  D_{45}] = 0, ~~[C_{26},  D_{23}] + [C_{26},  D_{63}] - [D_{23},  D_{63}] = 0
$$
$$
[C_{46},  D_{41}] + [C_{46},  D_{61}] - [D_{41},  D_{61}] = 0.
$$

From Theorem \ref{tc} it follows

\begin{corollary}
In the Lie algebra $L(Y_3)$ the elements
$$
E_{12} = B_{13} + C_{24} + D_{41} - D_{23},~~E_{13} = B_{15} + C_{26} + D_{61} - D_{25},~~E_{23} = B_{35} + C_{46} + D_{63} - D_{45},
$$
satisfy the CYBE,
$$
[E_{12},  E_{13}] + [E_{12},  E_{23}] + [E_{13},  E_{23}] = 0.
$$
\end{corollary}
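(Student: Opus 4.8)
The plan is to exploit that $L(Y_3)$ is, by construction, the \emph{universal} Lie algebra carrying exactly the data that enters Theorem~\ref{tc}, so that the (purely formal) verification given there transports verbatim. First I would fix the dictionary. Index the six tensor slots of $(B\otimes C)^{\otimes 3}$ by $1,\dots,6$, with slots $1,2$ forming the first copy of $B\otimes C$, slots $3,4$ the second and slots $5,6$ the third, $B$ occupying the odd slots and $C$ the even ones. Then $\widetilde r_{(12)(34)}=r^B_{13}+r^C_{24}+r_{41}-r_{23}$, $\widetilde r_{(12)(56)}=r^B_{15}+r^C_{26}+r_{61}-r_{25}$ and $\widetilde r_{(34)(56)}=r^B_{35}+r^C_{46}+r_{63}-r_{45}$ match term by term the elements $E_{12},E_{13},E_{23}$ under $B_{ij}\leftrightarrow r^B_{ij}$, $C_{ij}\leftrightarrow r^C_{ij}$, $D_{ij}\leftrightarrow r_{ij}$.

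Next I would check that each defining relation of $L(Y_3)$ is exactly one of the hypotheses of Theorem~\ref{tc} read in these six slots: the Yang--Baxter relations among the $B_{ij}$ (resp.\ the $C_{ij}$) are the CYBE for $r^B$ on slots $\{1,3,5\}$ (resp.\ for $r^C$ on slots $\{2,4,6\}$); the commutativity relations say that generators supported on disjoint slots commute; and the mixed relations are precisely the instances of the fused conditions~\eqref{fused} and~\eqref{fused2} (after the appropriate relabelling of slots) that occur in the proof of Theorem~\ref{tc}. In particular the assignment $B_{ij},C_{ij},D_{ij}\mapsto r^B_{ij},r^C_{ij},r_{ij}$ extends to a Lie algebra homomorphism from $L(Y_3)$ into the ambient Lie algebra of that theorem, sending $E_{ij}$ to $\widetilde r$.

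I would then transport the computation. In the proof of Theorem~\ref{tc} one expands the CYBE for $\widetilde r$ purely by bilinearity of the bracket and collects the resulting brackets of pairs of summands into the left-hand sides of the relations above --- the Jacobi identity and the concrete realization are never used. So expanding $[E_{12},E_{13}+E_{23}]+[E_{13},E_{23}]$ in $L(Y_3)$ and grouping in the same way: brackets of two $B$'s give the $B$-Yang--Baxter expression, brackets of two $C$'s the $C$-one, brackets of a $B$ with a $C$ vanish by commutativity, and the remaining terms (those involving the $D$'s, together with the $[D,D]$ terms) assemble into the mixed relations; each group is zero, hence so is the total. Equivalently, the identity is the image of the CYBE for $\widetilde r$ under the homomorphism above, but the universal statement needs only the formal argument, which makes no reference to $B$ and $C$ at all.

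The only real work is the bookkeeping: one must confirm that this grouping exhausts the expansion with exactly the index triples and the $\pm$ signs of the defining relations of $L(Y_3)$. This is where the mild subtlety lies --- relabelling slots can flip a sign, and indeed the $B$-mixed relations and the $C$-mixed relations of $L(Y_3)$ carry opposite sign patterns, mirroring the asymmetry between~\eqref{fused} and~\eqref{fused2}. Since $L(Y_3)$ was defined by transcribing precisely those relations, this is a matter of inspection rather than a new idea; the one place to be careful is not to slip a sign or an index when matching~\eqref{fused} and~\eqref{fused2} to the mixed relations.
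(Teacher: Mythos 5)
Your proposal is correct and is essentially the paper's own argument: the paper derives the corollary directly from Theorem~\ref{tc} with no further proof, the implicit point being exactly what you spell out --- that the verification in Theorem~\ref{tc} is purely formal (bilinearity plus the CYBE, commutativity and fused relations, with no use of the Jacobi identity or the concrete realization), and the defining relations of $L(Y_3)$ are a verbatim transcription of those hypotheses, so the computation transports to $L(Y_3)$.
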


In the previous subsection we construct a group homomorphism $\tau : Y_3 \to VP_6$. This homomorphism induces a homomorphism of Lie algebras.

\begin{proposition}
It exist a homomorphism of Lie algebras  $L(Y_3) \to L(VP_6)$ that is defined on the generators by the rules
$$
B_{ij} \mapsto L_{ij},~~C_{kl} \mapsto L_{kl},~~D_{ij}  \mapsto L_{ji}.
$$
\end{proposition}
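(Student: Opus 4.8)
The plan is to run the same verification as in the proof of Proposition~\ref{hom}, one step ``down'': we replace $VP_6$ by the infinitesimal Lie algebra $L(VP_6)$ and each defining relation of $Y_3$ by its linearization. Recall that $L(VP_n)$ (defined in \cite{BEE}) is presented by the generators $L_{ij}$, $1\le i\ne j\le n$, subject to the commutativity relations $[L_{ij},L_{kl}]=0$ for pairwise distinct $i,j,k,l$ and the triangle relations $[L_{ki},L_{kj}]+[L_{ki},L_{ij}]+[L_{kj},L_{ij}]=0$ for pairwise distinct $i,j,k$; these are exactly the linearizations of the two families of defining relations of $VP_n$. Likewise $L(Y_3)$ is given by generators and relations, so by the universal property of a presented Lie algebra it suffices to check that $B_{ij}\mapsto L_{ij}$, $C_{kl}\mapsto L_{kl}$, $D_{ij}\mapsto L_{ji}$ carries each defining relation of $L(Y_3)$ to a relation that holds in $L(VP_6)$. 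Conceptually this is the infinitesimal counterpart of the group homomorphism $\tau$ of Proposition~\ref{hom}, obtained by linearizing its relations.

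For the Yang--Baxter relations, $[B_{13},B_{15}]+[B_{13},B_{35}]+[B_{15},B_{35}]=0$ is sent to $[L_{13},L_{15}]+[L_{13},L_{35}]+[L_{15},L_{35}]=0$, the triangle relation of $L(VP_6)$ with $(k,i,j)=(1,3,5)$, and the $C$-relation is sent to the triangle relation with $(k,i,j)=(2,4,6)$. For the commutativity relations, note that the $b$-indices lie in $\{1,3,5\}$, the $c$-indices in $\{2,4,6\}$, and the hypotheses on the remaining commutators require all four indices to be distinct; hence each of $[B_{ij},C_{kl}]$, $[B_{ij},D_{kl}]$, $[C_{ij},D_{kl}]$, $[D_{ij},D_{kl}]$ is sent to a bracket $[L_{pq},L_{rs}]$ with $\{p,q\}\cap\{r,s\}=\emptyset$, which vanishes by the commutativity relations of $L(VP_6)$.

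The mixed relations are the only place that needs index bookkeeping, and keeping track of signs is the mild main obstacle. Since $D_{ij}\mapsto L_{ji}$, the three $B$-type mixed relations become $[L_{13},L_{16}]+[L_{13},L_{36}]+[L_{16},L_{36}]=0$, $[L_{15},L_{14}]+[L_{15},L_{54}]+[L_{14},L_{54}]=0$ and $[L_{35},L_{32}]+[L_{35},L_{52}]+[L_{32},L_{52}]=0$, i.e.\ the triangle relations with $(k,i,j)$ equal to $(1,3,6)$, $(1,5,4)$, $(3,5,2)$ respectively. The three $C$-type mixed relations carry the extra minus sign that is inherited from the asymmetry of conditions (\ref{fused2}) and (\ref{fused}) in Theorem~\ref{tc}: for example $[C_{24},D_{25}]+[C_{24},D_{45}]-[D_{25},D_{45}]=0$ is sent to $[L_{24},L_{52}]+[L_{24},L_{54}]-[L_{52},L_{54}]=0$, and by antisymmetry of the bracket this is exactly the triangle relation $[L_{52},L_{54}]+[L_{52},L_{24}]+[L_{54},L_{24}]=0$ with $(k,i,j)=(5,2,4)$; the other two reduce in the same way to the triangle relations with $(k,i,j)=(3,2,6)$ and $(k,i,j)=(1,4,6)$. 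Since every defining relation of $L(Y_3)$ thus lands among the relations of $L(VP_6)$, the assignment extends to the desired Lie algebra homomorphism $L(Y_3)\to L(VP_6)$.
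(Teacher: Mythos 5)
Your verification is correct and matches the paper's (implicit) argument: the paper states this proposition without a written proof, merely noting that the group homomorphism $\tau$ of Proposition~\ref{hom} induces the Lie algebra map, and your relation-by-relation check — including the sign bookkeeping showing each mixed relation lands on a triangle relation of $L(VP_6)$ up to an overall sign — is exactly the computation being left to the reader. (The aside attributing the minus signs to conditions (\ref{fused2})--(\ref{fused}) is tangential — they simply reflect the ordering $c_{24}d_{45}d_{25}=d_{25}d_{45}c_{24}$ in the group relations — but this does not affect the correctness of your check.)
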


\subsection{Simplicial sets and simplicial groups} Other approach for constructing set-theoretic solutions for the Braid equation on the direct product $B \times C$ is based on an operation of doubling of strings and is presented graphically  on Figure \ref{pic-S}.

\begin{figure}[H]
\centering
\includegraphics[width=60mm]{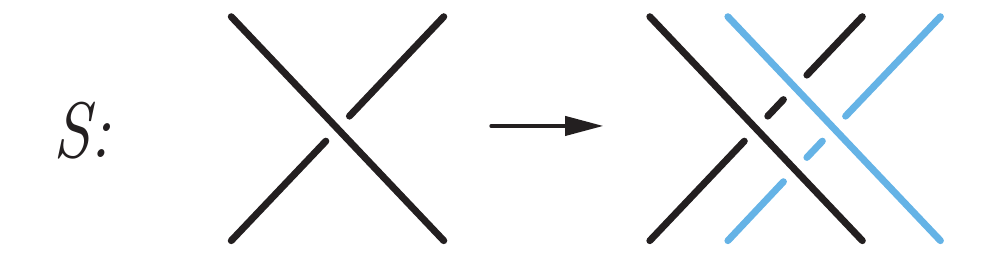}
\caption{Doubling of $S$}
\label{pic-S}
\end{figure}

 Recall the definition of simplicial groups (see \cite[p.~300]{MP} or \cite{BCWW}). A sequence of sets $X_* = \{ X_n \}_{n \geq 0}$  is called a
{\it simplicial set} if there are face maps:
$$
d_i : X_n \longrightarrow X_{n-1} ~\mbox{for}~0 \leq i \leq n
$$
and degeneracy maps
$$
s_i : X_n \longrightarrow X_{n+1} ~\mbox{for}~0 \leq i \leq n,
$$
that satisfy the following simplicial identities:
\begin{enumerate}
\item $d_i d_j = d_{j-1} d_i$ if $i < j$,
\item $s_i s_j = s_{j+1} s_i$ if $i \leq j$,
\item $d_i s_j = s_{j-1} d_i$ if $i < j$,
\item $d_j s_j = id = d_{j+1} s_j$,
\item $d_i s_j = s_{j} d_{i-1}$ if $i > j+1$.
\end{enumerate}
Here $X_n$ can be geometrically viewed as the set of $n$-simplices including all possible degenerate simplices.

A {\it simplicial group} is a simplicial set $X_*$ such that each $X_n$ is a group and all face and degeneracy operations are group homomorphism.

Let us define a simplicial set $\AB_*  = \{ AB_n \}_{n \geq 0}$, where $AB_n = B_{n+1}$ and we assume that $B_1$ is the trivial group.
There are face maps:
$$
d_i : AB_n = B_{n+1} \longrightarrow AB_{n-1} = B_{n } ~\mbox{for}~0 \leq i \leq n,
$$
that are deletion of the $(i+1)$-th strand 
and degeneracy  maps
$$
s_i : AB_n = B_{n+1} \longrightarrow AB_{n+1} = B_{n+2} ~\mbox{for}~0 \leq i \leq n,
$$
that are doubling  of the $(i+1)$-th strand. 
Hence, we have a simplicial set 
$$
\AB_* :\ \ \ \ldots\ \begin{matrix}\longrightarrow\\[-3.5mm] \ldots\\[-2.5mm]\longrightarrow\\[-3.5mm]
\longleftarrow\\[-3.5mm]\ldots\\[-2.5mm]\longleftarrow \end{matrix}\ B_4 \ \begin{matrix}\longrightarrow\\[-3.5mm]\longrightarrow\\[-3.5mm]\longrightarrow\\[-3.5mm]\longrightarrow\\[-3.5mm]\longleftarrow\\[-3.5mm]
\longleftarrow\\[-3.5mm]\longleftarrow
\end{matrix}\ B_3\ \begin{matrix}\longrightarrow\\[-3.5mm] \longrightarrow\\[-3.5mm]\longrightarrow\\[-3.5mm]
\longleftarrow\\[-3.5mm]\longleftarrow \end{matrix}\ B_2\ \begin{matrix} \longrightarrow\\[-3.5mm]\longrightarrow\\[-3.5mm]
\longleftarrow \end{matrix}\ B_1.
$$
It is easy to see that this simplicial set is not a simplicial group since the maps $d_i$ and $s_i$ are not group homomorphism.

\begin{example}
If we take the action of $s_0$ on
$$
B_3 = \langle \sigma_1, \sigma_2 ~||~ \sigma_1 \sigma_2 \sigma_1 =  \sigma_2 \sigma_1 \sigma_2 \rangle,
$$
then $s_0(\sigma_1) = e$ and $s_0(\sigma_2) = \sigma_2$. Hence, the relation under the action of $s_0$ takes the form $\sigma_2 = \sigma_2^2$, but it means that $\sigma_2 = e.$ 

If we take the action of $d_0$ on $B_3$, we get 
$$
d_0 (\sigma_1) = \sigma_2 \sigma_1,~~d_0 (\sigma_2) = \sigma_3,
$$
and relation takes the form
$$
 \sigma_2 \sigma_1 \cdot  \sigma_3 \cdot  \sigma_2 \sigma_1 =  \sigma_3 \cdot  \sigma_2 \sigma_1 \cdot  \sigma_3.
$$
But considering the homomorphism $B_4$ to $S_4$ we see that this relation is not true in $B_4$.
\end{example}

\begin{remark}
If one takes the pure braid groups instead of the braid groups then as was proved in~\cite{BCWW, CW} the corresponding simplicial set is  a simplcial group.
But in the pure braid group $P_3$ we have relations
$$
a_{12} a_{13} a_{23} = a_{23} a_{12} a_{13},~~a_{13} a_{23} a_{12} = a_{23} a_{12} a_{13},
$$
which are not the same as the Yang--Baxter relation.
\end{remark}

If we define the composition 
$$
D_n=s_0 s_1 \ldots s_{n-1} : B_n \to B_{2n},
$$
that is the doubling of all strands, where we take the composition from the  right to the  left, then   we get the next 
 analogous  of Corollary \ref{homvp3} in the braid group $B_{2n}$
\begin{lemma} \label{extb3}
The map 
$$
D_n=s_0 s_1 \ldots s_{n-1} : B_n \to B_{2n},
$$
is a group homomorphism and 
$$
D_n(\sigma_i)=\sigma_{2i}\sigma_{2i+1} \sigma_{2i-1}\sigma_{2i},~~i = 1, 2, \ldots,n-1.
$$
\end{lemma}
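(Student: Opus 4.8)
The plan is to recognise the composite $D_n=s_0\circ s_1\circ\cdots\circ s_{n-1}$ as the classical ``double every strand'' cabling homomorphism $B_n\to B_{2n}$, and then to read off its value on a generator by positional bookkeeping. Recall that $s_k\colon B_{m+1}\to B_{m+2}$ doubles the strand whose top endpoint lies in position $k+1$, inserting the new parallel copy immediately to its right. In $D_n$ one applies $s_{n-1}$ first to $\beta\in B_n$, then $s_{n-2}$, and so on, finishing with $s_0$. Going step by step: $s_{n-1}$ doubles the original strand $n$; doubling a strand in a position $>n-1$ leaves positions $1,\dots,n-1$ intact, so $s_{n-2}$ doubles the original strand $n-1$; inductively the $j$-th map applied, $s_{n-j}$, doubles the original strand $n-j+1$. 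Hence after all $n$ steps every original strand has been doubled exactly once, the two copies of original strand $i$ forming a cable that occupies positions $2i-1,2i$ at the top. So $D_n$ is exactly the $2$-cabling of every strand.

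Next I would argue that $D_n$ is a group homomorphism. This is not in conflict with the fact (illustrated in the Example) that the individual $s_k$ are not homomorphisms: $s_k$ doubles just one strand, and since concatenation permutes the strands, ``double the strand in position $k+1$'' does not commute with stacking; but $D_n$ doubles every strand, so for $\beta,\gamma\in B_n$ the diagram of $\beta\gamma$ with all strands doubled is literally the diagram of $D_n(\beta)$ stacked on that of $D_n(\gamma)$, the doubled bottom endpoints of $\beta$ matching the doubled top endpoints of $\gamma$ position by position; hence $D_n(\beta\gamma)=D_n(\beta)D_n(\gamma)$. (Alternatively, one checks directly that $\sigma_i\mapsto\sigma_{2i}\sigma_{2i+1}\sigma_{2i-1}\sigma_{2i}$ respects the defining relations of $B_n$ --- far commutativity since for $|i-j|\ge 2$ the index sets $\{2i-1,2i,2i+1\}$ and $\{2j-1,2j,2j+1\}$ are at distance $\ge 2$, and the braid relation since it is the ``thickened'' third Reidemeister move among three parallel cables, a bounded computation inside $B_6$ --- and then identifies this homomorphism with $D_n$ via the computation below.)

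For the value on a generator, take $\beta=\sigma_i\in B_n$, a single positive crossing of strands $i$ and $i+1$. Among $s_{n-1},\dots,s_0$ only $s_i$ (doubling strand $i+1$) and $s_{i-1}$ (doubling strand $i$) interact with this crossing; the maps applied before $s_i$ double strands in positions $>i+1$ and affect nothing, while the maps applied after $s_{i-1}$ double strands in positions $<i$ and only displace the already-formed cable. Applying $s_i$ turns $\sigma_i$ into ``strand $i$ over the cable of strand $i+1$'', locally the word $\sigma_i\sigma_{i+1}$; applying $s_{i-1}$ then doubles the remaining single strand and yields the full cable--cable swap, locally $\sigma_{p+1}\sigma_{p+2}\sigma_p\sigma_{p+1}$ on the four strands then in positions $p,p+1,p+2,p+3$ with $p=i$ --- which one confirms by following the permutation $(a,b,c,d)\mapsto(c,d,a,b)$ and checking that all four crossings stay positive. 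Finally the remaining $i-1$ maps $s_{i-2},\dots,s_0$ each raise every index in play by one, so $p$ becomes $2i-1$ and the word becomes $\sigma_{2i}\sigma_{2i+1}\sigma_{2i-1}\sigma_{2i}$; the count $i-1$ is valid uniformly in $i$, so no separate treatment of $i=1$ or $i=n-1$ is needed.

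I expect the chief difficulty to be exactly this positional bookkeeping carried out carefully: pinning down which physical strand the label ``strand $k+1$'' denotes at each stage, and how the emerging cable of $\sigma_i$ is pushed to the right by the degeneracies applied afterwards --- together with stating cleanly why this particular composite of non-homomorphic set maps is a group homomorphism.
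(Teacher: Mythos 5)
Your proof is correct and follows essentially the same route the paper intends: the paper gives no written argument for Lemma \ref{extb3} beyond the remark that it ``can be illustrated by Figure \ref{pic-PS}, where the generators of $B_n$ are realized as braidings of pairs of strands,'' and your careful positional bookkeeping identifying $s_0\circ\cdots\circ s_{n-1}$ with the double-every-strand cabling is exactly that geometric argument made explicit. The formula you derive agrees with the paper's own instances (e.g.\ $D_3(\sigma_1)=\sigma_2\sigma_1\sigma_3\sigma_2=\sigma_2\sigma_3\sigma_1\sigma_2$ since $\sigma_1$ and $\sigma_3$ commute), and your optional direct check of the relations is a sound, if unneeded, supplement.
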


\begin{remark}
This statement can be illustrated by  Figure \ref{pic-PS}, where the generators of $B_n$ are realized as braidings of pairs of strands.
\end{remark}
\begin{figure}[h!]
\centering
\includegraphics[width=35mm]{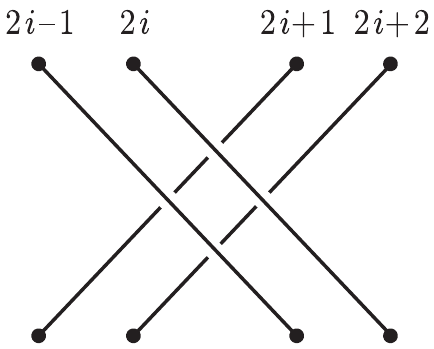}
\caption{Pairs of strands}
\label{pic-PS}
\end{figure}

\begin{example}
If we take the action of $D_3$ on $B_3$, we get the next subgroup of $B_6$
$$
D_3(B_3) = \langle \sigma_2 \sigma_1 \sigma_3 \sigma_2, ~~\sigma_4 \sigma_3 \sigma_5 \sigma_4~||~ \sigma_2 \sigma_1 \sigma_3 \sigma_2 \cdot \sigma_4 \sigma_3 \sigma_5 \sigma_4 \cdot \sigma_2 \sigma_1 \sigma_3 \sigma_2 =  \sigma_4 \sigma_3 \sigma_5 \sigma_4 \cdot \sigma_2 \sigma_1 \sigma_3 \sigma_2 \cdot \sigma_4 \sigma_3 \sigma_5 \sigma_4 \rangle.
$$
\end{example}

\begin{remark}
We defined the doubling $D_n$. By analogy we can define tripling and so on. More accurately  put
$$
D_n^{(k)} = s_0^k s_1^k \ldots s_{n-1}^k : B_n \to B_{(k+1)n},~~k = 1, 2, \ldots.
$$
In particular $D_n^{(1)} = D_n$.
\end{remark}

\subsection{Doubling of the pure virtual braid group.}
By using the same ideas as in the work~\cite{BCWW,CW} on the classical braids in \cite{BW1} it was introduced a simplicial group
$$
\VAP_* :\ \ \ \ldots\ \begin{matrix}\longrightarrow\\[-3.5mm] \ldots\\[-2.5mm]\longrightarrow\\[-3.5mm]
\longleftarrow\\[-3.5mm]\ldots\\[-2.5mm]\longleftarrow \end{matrix}\ VP_4 \ \begin{matrix}\longrightarrow\\[-3.5mm]\longrightarrow\\[-3.5mm]\longrightarrow\\[-3.5mm]\longrightarrow\\[-3.5mm]\longleftarrow\\[-3.5mm]
\longleftarrow\\[-3.5mm]\longleftarrow
\end{matrix}\ VP_3\ \begin{matrix}\longrightarrow\\[-3.5mm] \longrightarrow\\[-3.5mm]\longrightarrow\\[-3.5mm]
\longleftarrow\\[-3.5mm]\longleftarrow \end{matrix}\ VP_2\ \begin{matrix} \longrightarrow\\[-3.5mm]\longrightarrow\\[-3.5mm]
\longleftarrow \end{matrix}\ VP_1$$
on the pure virtual braid groups with $\VAP_n=VP_{n+1}$, the face homomorphism
$$
d_i : \VAP_n=VP_{n+1} \longrightarrow \VAP_{n-1}=VP_n
$$
given by deleting $(i+1)$-th strand for $0\leq i\leq n$, and the degeneracy homomorphism
$$
s_i : \VAP_n=VP_{n+1} \longrightarrow \VAP_{n+1}=VP_{n+2}
$$
given by doubling the $(i+1)$-th strand for $0\leq i\leq n$.

As in the case of braid group we can define the map
$$
D_n^{(k)} = s_0^k s_1^k \ldots s_{n-1}^k : VP_n \to VP_{(k+1)n},~~k = 1, 2, \ldots.
$$
In particular let us find the image of $VP_3$ under the action of $D_3 = D_3^{(1)}$. We can do it using the formulas of actions of $s_i$ as on the generators of $VP_3$. These formulas were found in 
\cite{BW}.

\begin{proposition}{\cite{BW}} \label{p3.1}
The degeneracy map $s_j : VP_n \longrightarrow VP_{n+1}$, $j = 0, 1, \ldots, n-1$, acts on the generators $\lambda_{k,l}$ and $\lambda_{l,k}$, $1 \leq k < l \leq n$, of $VP_n$ by the rules
$$
s_{i-1} (\lambda_{k,l}) = \left\{
\begin{array}{lr}
\lambda_{k+1,l+1} & for  ~i < k,\\
\lambda_{k,l+1} \lambda_{k+1,l+1} & for ~i = k, \\
\lambda_{k,l+1} & for  ~k < i < l,\\
& \\
\lambda_{k,l+1} \, \lambda_{k,l} & for ~i = l, \\
& \\
\lambda_{k,l} & for  ~i > l,
\end{array}
\right.
$$
$$
s_{i-1} (\lambda_{l,k}) = \left\{
\begin{array}{lr}
\lambda_{l+1,k+1} & for  ~i < k,\\
\lambda_{l+1,k+1} \lambda_{l+1,k} & for ~i = k, \\
\lambda_{l+1,k} & for  ~k < i < l,\\
& \\
\lambda_{l,k} \, \lambda_{l+1,k}   & for ~i = l, \\
& \\
\lambda_{l,k} & for  ~i > l.
\end{array}
\right.
$$
\end{proposition}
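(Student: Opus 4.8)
The plan is to reduce the statement to the well-understood behaviour of the strand-doubling operation on the standard generators of $VB_n$ and then to substitute into the words expressing $\lambda_{k,l}$ and $\lambda_{l,k}$. Recall from Section~\ref{sec-prelim} that $VP_n\le VB_n$ and that, for $1\le k<l\le n$,
\[
\lambda_{k,k+1}=\rho_k\sigma_k,\qquad \lambda_{k+1,k}=\sigma_k\rho_k,
\]
while for $l>k+1$
\[
\lambda_{k,l}=\rho_{l-1}\cdots\rho_{k+1}\,\rho_k\sigma_k\,\rho_{k+1}\cdots\rho_{l-1},\qquad
\lambda_{l,k}=\rho_{l-1}\cdots\rho_{k+1}\,\sigma_k\rho_k\,\rho_{k+1}\cdots\rho_{l-1}.
\]
Since $\VAP_*$ is a simplicial group (see \cite{BW1}), the map $s_{i-1}\colon VP_n\to VP_{n+1}$ is a group homomorphism, and it is the restriction of the doubling of the $i$-th strand, an operation defined on all of $VB_n$. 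Hence it suffices to compute $s_{i-1}(\sigma_k)$ and $s_{i-1}(\rho_k)$ and then multiply out the images of the letters appearing in the words above.

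The first thing I would record is the value of strand-doubling on a single generator, read off from the cabling picture. Doubling the $i$-th strand creates no crossing between the two new strands, so a crossing not touching the $i$-th strand is merely reindexed: $\sigma_k\mapsto\sigma_{k+1}$, $\rho_k\mapsto\rho_{k+1}$ if $i<k$, and $\sigma_k\mapsto\sigma_k$, $\rho_k\mapsto\rho_k$ if $i>k+1$. A crossing touching the $i$-th strand, i.e. the cases $i=k$ and $i=k+1$, turns into the evident pair of consecutive crossings of a single strand with the pair of new strands, so $\sigma_k$ (resp. $\rho_k$) is sent to a word of length two in the $\sigma$'s (resp. $\rho$'s) with indices among $k,k+1$. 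With these substitutions in hand, the cases $i<k$, $k<i<l$ and $i>l$ of the proposition become pure reindexing and yield $\lambda_{k+1,l+1}$, $\lambda_{k,l+1}$ and $\lambda_{k,l}$ at once.

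The only genuine computation is in the cases $i=k$ and $i=l$, where one of the two ``active'' strands of $\lambda_{k,l}$ is split and the substituted word is longer than a single generator. Here I would use the commutation relations of $VP_{n+1}$ together with the virtual-braid mixed relation $\rho_m\rho_{m+1}\sigma_m=\sigma_{m+1}\rho_m\rho_{m+1}$ — equivalently the conjugation rule of Lemma~\ref{form} — to collapse the word to the claimed product $\lambda_{k,l+1}\lambda_{k+1,l+1}$ when $i=k$, and $\lambda_{k,l+1}\lambda_{k,l}$ when $i=l$; the order of the two factors is dictated by the order of the two new strands in the cabling. The computations for the family $\lambda_{l,k}$ are entirely parallel, starting from the corresponding words above; alternatively they follow from those for $\lambda_{k,l}$ via the homomorphism of $VP_n$ onto $VP_n^+$ sending $\lambda_{l,k}\mapsto\lambda_{k,l}^{-1}$, which commutes with doubling.

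The main obstacle is therefore just the bookkeeping of crossings in the two split cases $i=k$, $i=l$; everything else is routine reindexing. If a fully self-contained argument is wanted, an alternative is to bypass the $VB_n$-words altogether: take the listed formulas as the definition of $s_{i-1}$ on the generators $\lambda_{k,l}$, $\lambda_{l,k}$ ($1\le k<l\le n$), verify that each defining relation (\ref{rel}) and (\ref{relation}) of $VP_n$ is carried to a relation of $VP_{n+1}$ so that the assignment extends to a homomorphism, and then identify this homomorphism with the geometric doubling by comparing the two on the diagram of each generator. This trades the crossing bookkeeping for a finite, if lengthy, verification of relations, and would be my choice for a clean write-up.
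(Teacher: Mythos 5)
The paper itself offers no proof of Proposition~\ref{p3.1}; it is imported verbatim from \cite{BW}, so there is no in-paper argument to measure yours against. Judged on its own terms, your outline has the right overall shape (write $\lambda_{k,l}$ as a word in the $\sigma$'s and $\rho$'s, double the diagram, and identify the result using the conjugation rule of Lemma~\ref{form}), but it has a genuine gap at its central step. Doubling the $i$-th strand is \emph{not} a group homomorphism of $VB_n$ --- the paper makes exactly this point for $B_n$ in Section~\ref{sec-Gr} --- so you cannot ``compute $s_{i-1}(\sigma_k)$ and $s_{i-1}(\rho_k)$ and then multiply out the images of the letters'' with a single fixed per-letter rule. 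The correct diagrammatic procedure replaces each letter according to whether the doubled strand \emph{currently} occupies one of the two positions involved in that crossing, and that position changes as you traverse the word. Concretely, in $\lambda_{k,l}=\rho_{l-1}\cdots\rho_{k+1}\,\rho_k\sigma_k\,\rho_{k+1}\cdots\rho_{l-1}$ the strand starting at position $i$ with $k<i<l$ is hit by the letter $\rho_i$ in the prefix (where strand $l$ passes it on its way to position $k+1$) and again by $\rho_i$ in the suffix. So your claim that the case $k<i<l$ is ``pure reindexing'' is false: two virtual crossings get doubled, the image is a word of length $2(l-k)+2$, and one still has to recognise it as the single generator $\lambda_{k,l+1}$. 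The cases $i<k$ and $i>l$ are indeed pure reindexing, and $i=k$, $i=l$ are as you describe, but the ``bookkeeping'' you defer is precisely the position-tracking your set-up omits.

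Your fallback in the last paragraph does not repair this: after verifying that the displayed formulas respect the relations (\ref{rel}) and (\ref{relation}), ``identifying this homomorphism with the geometric doubling by comparing the two on the diagram of each generator'' is exactly the computation of $s_{i-1}(\lambda_{k,l})$ that is at issue, so nothing has been bypassed. A clean fix is to define the cabling of a word $g_1\cdots g_m$ at position $i$ as the product of the local doublings applied at the running positions $i_0=i$, $i_j=(i_{j-1})\bar g_j$ (with $\bar g_j$ the underlying permutation of the letter $g_j$), check that this is well defined on $VB_n$ and restricts to the degeneracy $s_{i-1}$ on $VP_n$, and only then carry out the five-case analysis; the family $\lambda_{l,k}$ then follows either in parallel or, as you note, from $\lambda_{l,k}\mapsto\lambda_{k,l}^{-1}$ provided you first justify that this homomorphism onto $VP_n^+$ commutes with the degeneracies.
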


Using this Proposition we can find
$$
D_3(\lambda_{12}) = s_0 s_1 s_2(\lambda_{12}) =  s_0 s_1 (\lambda_{12}) = s_0 (\lambda_{13} \lambda_{12}) = \lambda_{14} \lambda_{24} \lambda_{13} \lambda_{23} = \lambda_{14}   \lambda_{13} \lambda_{24} \lambda_{23},
$$
and
$$
D_3(\lambda_{13}) = \lambda_{16} \lambda_{15} \lambda_{26} \lambda_{25},~~~D_3(\lambda_{23}) = \lambda_{36} \lambda_{35} \lambda_{46} \lambda_{45}.
$$
Since $D_3$ is a homomorphism $VP_3^+ \to VP_6^+$ then the elements $D_3(\lambda_{12})$,  $D_3(\lambda_{13})$ and $D_3(\lambda_{23})$ satisfy the Yang--Baxter   
relation
$$
D_3(\lambda_{12}) \, D_3(\lambda_{13})  \, D_3(\lambda_{23}) = D_3(\lambda_{12}) \, D_3(\lambda_{13}) \, D_3(\lambda_{23}).
$$

On the other hand we have
$$
D_3(\lambda_{12}) = D_3(\rho_1 \sigma_1) = \rho_2 \rho_1  \rho_3  \rho_2 \cdot \sigma_2 \sigma_1 \sigma_3 \sigma_2. 
$$
Hence, we constructed the group $Y_3$ and a homomorphism $\tau : Y_3 \to VP_6$
Under the canonical homomorphism $VP_6 \to VP_6^+$ which sends $\lambda_{ij}$ to  $\lambda_{ij}$ and sends $\lambda_{ji}$ to  $\lambda_{ij}^{-1}$ for $i < j$  the elements $N_{12}, N_{13}, N_{23}$ go to elements
$$
M_{12} = \lambda_{14} \lambda_{13} \lambda_{24} \lambda_{23},~~M_{13} = \lambda_{16} \lambda_{15} \lambda_{26} \lambda_{25},~~
M_{23} = \lambda_{36} \lambda_{35} \lambda_{46} \lambda_{45},
$$
of $VP_6^+$, correspondingly.  One can see that 
$$
D_3(\lambda_{12}) = M_{12},~~D_3(\lambda_{13}) = M_{13}, ~~D_3(\lambda_{23}) = M_{23}.
$$

\begin{corollary}
\label{homvp3}
In the group 
$\langle M_{12}, M_{13}, M_{23} \rangle \leq VP_6^+$ the following relation holds
$$
M_{12} M_{13} M_{23} = M_{23} M_{13} M_{12}.
$$
\end{corollary}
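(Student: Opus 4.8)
The plan is to deduce the relation formally from the homomorphism property of the doubling map, so that the corollary becomes an immediate consequence of the single defining relation of $VP_3^+$. Recall that $\VAP_*$ is a simplicial group, so each degeneracy map $s_i\colon VP_{n+1}\to VP_{n+2}$ is a group homomorphism; hence $D_3=s_0 s_1 s_2\colon VP_3\to VP_6$ is a group homomorphism. The first step is to check that $D_3$ restricts to a homomorphism $VP_3^+\to VP_6^+$: applying the explicit degeneracy formulas of Proposition~\ref{p3.1} one finds that $D_3(\lambda_{12})$, $D_3(\lambda_{13})$, $D_3(\lambda_{23})$ are precisely the words $M_{12}$, $M_{13}$, $M_{23}$, each of which is a positive word in the generators $\lambda_{kl}$ with $k<l$ and so lies in $VP_6^+$.

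The second step is to apply this homomorphism to the presentation $VP_3^+=\langle\lambda_{12},\lambda_{13},\lambda_{23}\mid \lambda_{12}\lambda_{13}\lambda_{23}=\lambda_{23}\lambda_{13}\lambda_{12}\rangle$. Since a group homomorphism carries relations to relations, the image of the unique defining relation under $D_3$ is exactly
$$
M_{12}M_{13}M_{23}=M_{23}M_{13}M_{12},
$$
which therefore holds in $VP_6^+$ as claimed.

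There is an alternative route tying the statement to Theorem~\ref{t}: the corollary to that theorem gives that $\Lambda_{12},\Lambda_{13},\Lambda_{23}\in Y_3$ satisfy the Yang--Baxter relation; transporting this relation along $\tau\colon Y_3\to VP_6$ (Proposition~\ref{hom}) shows that $N_{ij}=\tau(\Lambda_{ij})$ satisfy it in $VP_6$; and pushing forward along the canonical epimorphism $VP_6\to VP_6^+$ (which sends $\lambda_{ij}\mapsto\lambda_{ij}$ and $\lambda_{ji}\mapsto\lambda_{ij}^{-1}$ for $i<j$), under which $N_{ij}\mapsto M_{ij}$, yields the same conclusion.

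I do not expect a genuine obstacle: the only points needing (routine) care are the verification, via Proposition~\ref{p3.1}, that $D_3$ sends each $\lambda_{ij}$ with $i<j$ into $VP_6^+$ and equals $M_{ij}$ there, and---in the second route---the check that the canonical map sends each $N_{ij}$ to $M_{ij}$, which amounts to substituting $\lambda_{ij}^{-1}$ for each $\lambda_{ji}$ in the defining words and simplifying. All the real content is packaged in the homomorphism property of $D_3$ (equivalently of $\tau$) already established above, so the corollary is essentially immediate.
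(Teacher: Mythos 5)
Your proposal is correct and follows essentially the same route as the paper: the paper likewise computes $D_3(\lambda_{12})$, $D_3(\lambda_{13})$, $D_3(\lambda_{23})$ via Proposition~\ref{p3.1}, identifies them with $M_{12}$, $M_{13}$, $M_{23}$, and deduces the relation from the homomorphism property of $D_3\colon VP_3^+\to VP_6^+$. Your ``alternative route'' through $\tau$, the $N_{ij}$, and the canonical epimorphism $VP_6\to VP_6^+$ is also already present in the paper's surrounding discussion, so nothing here is genuinely different.
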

\begin{remark}
Proposition \ref{homvp6} is less restrictive than Corollary \ref{homvp3}, it does not demand the involutivity of the corresponding $R$-matrix. 
\end{remark}

\begin{remark}
This Corollary is equivalent to the statement of Lemma \ref{extb3}. Indeed, one can see that
$$
M_{12} = \lambda_{14} \lambda_{13} \lambda_{24} \lambda_{23} = \rho_3 \rho_2 \rho_1 \sigma_1 \rho_2 \rho_3 \cdot  \rho_2 \rho_1 \sigma_1 \rho_2 \cdot \rho_3 \rho_2  \sigma_2  \rho_3  \cdot \rho_2  \sigma_2 = \rho_2 \rho_1 \rho_3  \rho_2 \cdot \sigma_2 \sigma_1 \sigma_3 \sigma_2.
$$
On the other side,
$$
N_{12} = \lambda_{14} \lambda_{13} \lambda_{24} \lambda_{32}^{-1} = \rho_3 \rho_2 \rho_1 \sigma_1 \rho_2 \rho_3 \cdot  \rho_2 \rho_1 \sigma_1 \rho_2 \cdot \rho_3 \rho_2  \sigma_2  \rho_3  \cdot \sigma_2^{-1} \rho_2 = \rho_2 \rho_3 \rho_1  \rho_2 \cdot \sigma_2 \sigma_1 \sigma_3 (\rho_2 \sigma_2^{-1} \rho_2),
$$
and we see that this element can be constructed from $\lambda_{12}$ using more complicated operation than doubling of strings.
\end{remark}

Graphically the operation of constructing of set-theoretic solutions of the YBE on $B \times S$ is presented on Figure \ref{pic-R}.

\begin{figure}[H]
\centering
\includegraphics[width=60mm]{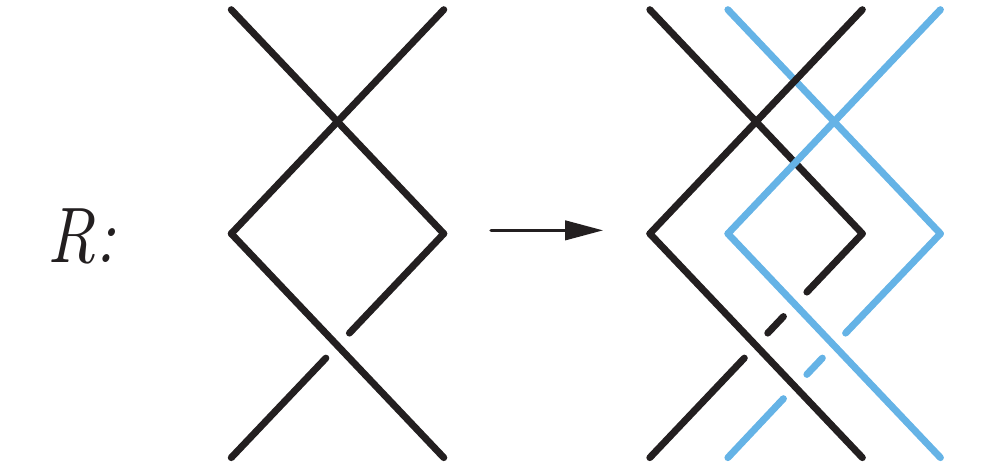}
\caption{Doubling of $R$}
\label{pic-R}
\end{figure}

\begin{ack}
The parts 1, 2  and 4 of the work was caried out with the support of the Russian Science Foundation grant 20-71-10110. The work on parts 3, 5 and 6 was supported by the Ministry of Science and Higher Education of Russia (agreement No. 075-02-2021-1392).
\end{ack}
\medskip

\end{document}